\numberwithin{equation}{section}
\title[Dirac Solutions in Kerr-Newman]{Solutions to the Dirac Equation in Kerr-Newman Geometries including the black-hole region}
\author[C.\ Krpoun]{Christoph Krpoun*}
\author[O.\ M\"uller]{Olaf M\"uller**}
\email{Christoph.krpoun@mathematik.ur.de, mullerol@mathematik.hu-berlin.de}
\address{*: Fakult\"at f\"ur Mathematik \\ Universit\"at Regensburg \\ Universitätsstr. 31 \\ D-93040 Regensburg \\ Germany}
\address{**: Fakult\"at f\"ur Mathematik \\ Humboldt Universit\"at zu Berlin \\ Unter den Linden 6 \\ D-10099 Berlin \\ Germany}
\newtheorem{Def}{Definition}[section]
\newtheorem{Thm}[Def]{Theorem}
\newtheorem{Prp}[Def]{Proposition}
\newtheorem{Lemma}[Def]{Lemma}
\newcommand{\beq}{\begin{equation}}
	\newcommand{\eeq}{\end{equation}}
\newcommand{\ce}{\centering}
\newcommand{\Proof}{\begin{proof}}
	\newcommand{\QED}{\end{proof} \noindent}
\newcommand{\la}{\langle}
\newcommand{\ra}{\rangle}
\newcommand{\bra}{\langle}
\newcommand{\ket}{\rangle}
\newcommand{\Sl}{\mathopen{\prec}}
\newcommand{\Sr}{\mathclose{\succ}}
\newcommand{\C}{\mathbb{C}}
\newcommand{\R}{\mathbb{R}}
\newcommand{\Z}{\mathbb{Z}}
\newcommand{\mm}{\mathcal{M}}
\newcommand{\nn}{\mathcal{N}}
\newcommand{\Gg}{\mathcal{G}}
\newcommand{\hh}{\mathcal{H}}
\newcommand{\s}{\sigma}
\newcommand{\Idmat}{\mathbbm{1}}
\newcommand{\dif}[1]{\text{d}#1}
\newcommand{\Ss}{\mathcal{S}}
\newcommand{\V}{\noindent}
\def\bes#1\ees{\begin{align*} #1 \end{align*}}
\def\be#1\ee{\begin{align} #1 \end{align}}
\def\bs{\begin{split}}
	\def\es{\end{split}}
\begin{document}
	
	\maketitle
	
	\begin{abstract}
		
		\noindent We investigate the Dirac equation in Kerr-Newman space-time, using horizon penetrating coordinates (Eddington-Finkelstein-Coordinates) and the Newman-Penrose formalism to separate the equation into radial and angular systems of ordinary differential equations, and deriving the asymptotics of the radial solutions at infinity and at the Cauchy horizon. 
	\end{abstract}


	\section{Introduction}
	\noindent Black holes are frequently studied models in mathematical physics that often push theories to their limits. 
	Moreover, after the first detection of a binary system by the joint LIGO/Virgo collaboration in 2015 displaying blatant similarities with the predictions 
	from black hole theory \cite{LIGO_Virgo}, they are even more in the focus of research as a quite accurate description of observed astrophysical phenomena. Both aspects urge a better understanding of the these objects.
	One step towards this goal is to derive solutions of relativistic quantum mechanical equations, such as the Dirac equation, in black-hole space-times. The Kerr-Newman family consisting of the axisymmetric stationary one-ended asymptotically flat Einstein-Maxwell solutions seems a good starting point for this analysis, which indeed has been performed already for some limit cases within this family [\cite{R_ken_2017}, \cite{FinsterAsympKerrNew} or \cite{Finster_2000}]. We extend these approaches to the case of a general Kerr-Newman space-time $\mm_{M,a,Q}$ and investigate the Dirac solutions in a region including the black hole, from the Cauchy horizon to future null infinity. 
	
	\bigskip
	
	In this analysis, we use the Eddington-Finkelstein coordinates, a generalization of the well known Kruskal coordinates, defined as follows: Starting from the usual Boyer-Lindquist coordinates $(t,r,\varphi, \theta) $ well-adapted to the symmetries of $\mm_{M,a,Q}$, but singular at the event horizon $r^{-1} (r_+)$ and the Cauchy horizon $r^{-1} (r_-)$, one defines a tortoise coordinate $r_{*}$ by
	
	\beq
	\ce
	r_{\star} := r + \dfrac{r_+^2 + a^2}{r_+ - r_-}\ln\big(|r-r_+|\big) - \dfrac{r_-^2 + a^2}{r_+ - r_-}\ln\big(|r-r_-|\big)  \in O_\infty (r) 
	\label{EFr}
	\eeq
	
	\V (with $r_* \rightarrow_{r \rightarrow r_\pm} {\mp} \infty$), a Cauchy temporal function $\tau := t + r_{\star} - r$ and an azimuthal coordinate $\phi := \varphi + \tilde{\varphi}$, forming the Eddington-Finkelstein coordinate system 
	$(\tau,r,\phi, \theta)$ in the range $\R \times (0, \infty) \times [0,2\pi) \times (0,\pi)$ over which the corresponding components $g^{EF}$ of $g$ are smooth.
	
	\bigskip
	
	\V After separating the Dirac equation in a radial and angular ODE (first done by Chandrasekhar for Kerr spacetime in \cite{ChandBH}) we obtain an asymptotic behaviour of the resulting radial ordinary differential equation in a Kerr-Newman space-time.

	\bigskip
	\bigskip

	\begin{Thm}
		The solutions of the radial ODE~\eqref{radialeODE} have the following asymptotics,
	\begin{enumerate}[leftmargin=2em]
	\item[{\rm{(i)}}] {\bf (Asymptotics at infinity)} Let $w_1 \in \C$ be the root of $\omega^2 - m^2$ contained in the convex hull of $\R_+ $ and $\R_+ \cdot i$ and $w_2 = -w_1$ 
        the other root, and let $\Theta := \frac{1}{4}\ln (\frac{\omega - m}{\omega + m} )$, then there is 
        $f_{\infty} := (f_{\infty}^{(1)}, f_{\infty}^{(2)})^T \in\R^2 \setminus \{ 0\} $ with
        \be
        \label{theo1}
        X_\omega(u) 
        &= \begin{bmatrix} 
            \cosh(\Theta) & \sinh(\Theta) \\
            \sinh(\Theta) & \cosh(\Theta) \\
        \end{bmatrix}	 \begin{pmatrix}
            f_{\infty}^{(1)}e^{i\Phi_+(u)} \\
            f_{\infty}^{(2)}e^{-i\Phi_-(u)} \\
        \end{pmatrix}
        + E_{\infty}(u)  \\
        \nonumber
        \ee
        \V for the asymptotic phases 
        \be
        \ce
        \Phi_{\pm}(u):= \pm w_{1,2}\, u + M \bigg( 2\omega \pm \dfrac{m^2}{w_{1,2}}\bigg) \ln(u)
        \label{asyPhase}
        \ee
        and for an error function $E_{\infty}(u)$ with polynomial decay. More precisely, there is $c \in \R_+$ with
        \bes
        \ce
        ||E_{\infty}|| \leqslant \dfrac{c}{u}.
        \ees
\item[{\rm{(ii)}}] {\bf (Asymptotics at the Cauchy horizon)}For every non-trivial solution $X$,
        \be
        \ce
        X_\omega(u) = 
        \begin{pmatrix}
            h_{r_-}^{(1)}e^{2i \omega u} \\
            h_{r_-}^{(2)}
        \end{pmatrix}
        + E_{r_-}(u)
        \label{theo:2}
        \ee
        with~$h_{r_-} := (h_{r_-}^{(1)}, h_{r_-}^{(2)})^T \in \R^2 \setminus \{0\} $, with $E_{r_-}$ such that for $r$ sufficiently close to $r_-$ and suitable constants $a,b \in \R_+$,
        \bes
        \ce
        ||E_{-}(u)|| \leq a e^{- b u}.
        \ees
\end{enumerate}
	\end{Thm}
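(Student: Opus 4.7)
The plan is to reduce both claims to standard WKB/Levinson-type perturbation arguments after an explicit diagonalization of the leading-order radial operator.

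For part~(i), I would first write the radial system~\eqref{radialeODE} in the form $X' = A(u)\,X$ and expand the coefficient matrix as $A(u) = A_\infty + B/u + R(u)$, with a constant matrix $A_\infty$ carrying the $\omega$- and $m$-dependence, a constant $B$ encoding the Coulomb-type ($M$-proportional) long-range part, and a remainder $R(u) = O(u^{-2})$. The key algebraic point is that $A_\infty$, although not anti-Hermitian because of the Dirac mass, has eigenvalues $\pm i w_1$ with $w_1$ the distinguished root of $\omega^2 - m^2$, and is diagonalized precisely by the hyperbolic rotation $P := \begin{pmatrix}\cosh\Theta & \sinh\Theta \\ \sinh\Theta & \cosh\Theta\end{pmatrix}$ appearing in~\eqref{theo1}. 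Substituting $X = PY$ transforms the ODE into $Y' = (\diag(iw_1, iw_2) + P^{-1}BP/u + \tilde R(u))Y$.

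Next I would strip off the resonant diagonal part of $P^{-1}BP/u$ by the phase ansatz $Y_k = e^{\pm i\Phi_\pm(u)} Z_k$, choosing $\Phi_\pm$ so that $\Phi_\pm'(u) = \pm w_{1,2} + (M/u)(2\omega \pm m^2/w_{1,2})$; integrating reproduces exactly~\eqref{asyPhase}. The surviving equation for $Z$ has only off-diagonal $O(1/u)$ entries, multiplied by rapidly oscillating factors $e^{\pm 2 i w_1 u + O(\ln u)}$ which render them integrable after a single integration by parts, plus the $O(u^{-2})$ remainder. Recasting this as a Volterra integral equation $Z(u) = f_\infty + \int_u^\infty K(s) Z(s)\,ds$ and applying a standard contraction/Levinson argument yields, for every $f_\infty \in \C^2 \setminus \{0\}$, a unique solution with $\|Z - f_\infty\| \leq c/u$; re-assembling $X = P\,\diag(e^{i\Phi_+}, e^{-i\Phi_-})\,Z$ produces~\eqref{theo1}.

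For part~(ii) the situation simplifies because~\eqref{EFr} implies $r - r_- \sim e^{-\kappa_- u}$ for the surface gravity $\kappa_- > 0$, so every perturbation of the asymptotic radial operator decays exponentially in $u$. Near $r_-$ I would bring the system into the form $X' = A_\infty^{(-)} X + \mathcal{E}(u) X$ with $\|\mathcal{E}(u)\| \leq C e^{-b' u}$, identify the eigenvalues $\{0, 2i\omega\}$ of $A_\infty^{(-)}$ (the shift by $2\omega$ being produced by the $\tau$- and $\phi$-redefinitions of the Eddington--Finkelstein passage), diagonalize to disentangle the constant and $e^{2 i \omega u}$ modes, and conclude via Gronwall's inequality that the error decays exponentially, giving~\eqref{theo:2}.

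I expect the main obstacle to be the precise identification of the logarithmic phase in~(i), i.e.\ verifying by direct computation that the diagonal of $P^{-1} B P$ equals $M(2\omega \pm m^2/w_{1,2})$ and that the angular separation constant, the charge $Q$, and the rotation parameter $a$ contribute only at the subleading order $O(u^{-2})$, which is required for~\eqref{asyPhase} to hold as stated. Once this algebraic check is carried out, confirming the $1/u$ (rather than merely $o(1)$) decay of the error requires one extra integration by parts in the Volterra equation, but is otherwise routine.
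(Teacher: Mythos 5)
Your overall strategy for (i) --- diagonalize the leading matrix, peel off a WKB phase that includes the $M/u$ Coulomb-type correction to obtain \eqref{asyPhase}, and control what remains by a Volterra/Gronwall estimate --- is the same as the paper's, and your part (ii) matches the paper's proof almost step for step (the paper substitutes the ansatz \eqref{theo:2}, observes that the resulting coefficient matrix has norm $O(e^{-\alpha r_\star})$ because $\Delta \in O(e^{-2\alpha r_\star})$ near $r_-$, and runs the same logarithmic-norm estimate; your rate $e^{-\kappa_- u}$ versus the paper's $e^{-2\alpha u}$ is immaterial). The one genuine methodological difference is in (i): you diagonalize only the constant matrix $A_\infty$ by the fixed hyperbolic rotation $P$, which leaves the off-diagonal part of $P^{-1}BP/u$ in the equation for $Z$ and forces you to exploit oscillation. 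The paper instead diagonalizes the full $u$-dependent potential $U(u)$ by a smooth matrix $D(u) = D_\infty + O(1/u)$, so the only surviving perturbation after the phase is stripped off is $-W^{-1}D^{-1}(\partial_u D)W$ with $\partial_u D = O(1/u^2)$, which is absolutely integrable; no integration by parts is needed. Your route makes the algebraic origin of $\Theta$ and of the diagonal of $P^{-1}BP$ more transparent; the paper's route makes the error analysis a one-line estimate.

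This difference is not cosmetic at one point. Your mechanism for the off-diagonal $O(1/u)$ terms --- ``rapidly oscillating factors $e^{\pm 2 i w_1 u}$ \dots integrable after a single integration by parts'' --- is valid only in the regime $|\omega| > m$, where $w_1 \in \R_+$ and the phases are real. For $m > |\omega|$ the distinguished root satisfies $w_1 \in \R_+ \cdot i$, the factors $e^{\pm 2 i w_1 u}$ are real exponentials $e^{\mp 2\sqrt{m^2-\omega^2}\,u}$, one of which grows, and integration by parts does not render it integrable; there you need the hyperbolic (dichotomy) form of Levinson's theorem, which your appeal to a ``standard contraction/Levinson argument'' presumably intends but does not spell out. (To be fair, the paper's own bound $\|W\|_{HS} = \|W^{-1}\|_{HS} = \sqrt{2}$ also tacitly assumes the phases are real, so this regime is delicate in both treatments.) Finally, the theorem asserts that \emph{every} nontrivial solution has nonzero asymptotic data $f_\infty$, whereas your Volterra construction produces a solution for each prescribed $f_\infty$; you should close the loop by noting that the resulting linear map $f_\infty \mapsto X$ is injective into the two-dimensional solution space of \eqref{radialeODE} and hence onto. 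The paper instead argues directly that $\|f\|$ stays pinched between $1/N$ and $N$ via the differential inequality $|\partial_u \ln\|f\|| \leq 2cd/u^2$.
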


\V Our result on the asymptotics towards Infinity matches the one obtained in Boyer-Lindquist coordinates in \cite{FinsterAsympKerrNew}.

	\noindent The paper has following structure: In Section 2, via the separation method, we show Eq. \ref{radialeODE}. 
	In Section 3 we perform the proof for the asymptotics. Three lengthy but straightforward calculation steps have been postponed to an appendix.
	
	\V In a subsequent work we will use these asymptotics to get an integral representation of the Dirac propagator, 
	to derive the fermionic signature operator, and ultimately, to find an alternative, observer-independent, definition for the Hawking radiation.
	
	\V The authors gratefully acknowledge substantial support of this work by Felix Finster. 
	
	\section{Deriving the integral representation}
	\subsection{Preliminaries on the tetrad formalism}
	\noindent We start with defining the necessary objects in the underlying theory. Let $(\mm,g)$ be a globally hyperbolic spacetime. We denote the spinor bundle of the manifold $\mm$ by $\s: \Ss \rightarrow \mm$, with fiber $\Ss_x$ over $x \in \mm$. 
	On each fiber we have a fibre metric $\Sl \cdot | \cdot \Sr_x$ with signature $(n,n)$ where $n = 2^{\lfloor k/2\rfloor -1}$ and $k = \text{dim}(\mm)$. The Clifford multiplication is defined via the maps of the general Dirac matrices $G$
	
	\beq
	\ce
	G : T_x \mm \longrightarrow L(\Ss_x) \quad \text{with} \quad G(u)G(v) + G(v)G(u) = 2 g(u,v)\Idmat_{\Ss_x}
	\label{diracMat}
	\eeq
	
	\noindent and is written out in components $G^{\mu}$. Throughout the article, we will use the Einstein convention $x^{\mu}y_{\mu} = \sum^n_{k=1} x^ky_k$.
	
	\V As a short-hand notation, we use the Feynmann dagger $\slashed{u} = u^{\mu}G_{\mu} = G(u)$. We denote the metric connection on $\s$ by $\nabla$. Furthermore, $C^{\infty}(\sigma)$ denotes the space of smooth 
	sections of $\sigma$. On $C^\infty (\s)$ we define canonically a globally, Lorentz invariant, but indefinite scalar product
	
	\beq
	\ce
	\begin{split}
		\bra \cdot | \cdot \ket : \, &C^{\infty}(\s) \times C^\infty_{sc}(\s) \longrightarrow \C \\
		&\bra \phi | \psi \ket = \int_M \prec \psi | \phi \succ_x \text{d}\mu_M 
	\end{split}
	\label{eq:1}
	\eeq
	\noindent We define the Dirac operator
	
	\beq
	\ce
	\Gg := iG^{\mu} \nabla_{\mu} + \mathcal{B} : C^{\infty}(M, \Ss M) \longrightarrow C^{\infty}(M, \Ss M), \nonumber
	\eeq
	
	\noindent with $\mathcal{B} \in L(\Ss_x)$ an external potential. This results in a Dirac equation for each constant mass parameter $m \in \R_+$
	
	\beq
	\label{DiracEq}
	\ce
	\big( \Gg -m \big)\psi_m = 0 .
	\vspace{2mm}
	\eeq
	
	\noindent The solutions of Eq. \ref{DiracEq} are denoted by $\psi_m$ and called {\em Dirac spinors} and $\hh_m$ the corresponding solution space. With the indefinite product in \eqref{eq:1} the tuple $(\hh_m, \bra \cdot | \cdot \ket)$ is a Krein space. For spectral calculations, it is useful that not on the space of spinors but on the space $\hh_m$ of Dirac spinors we have another natural, Lorentz invariant scalar product that is positive-definite (and thereby fixes a pre-Hilbert space structure), which is defined by 
	
	\beq
	\ce
	\big( \psi_m | \phi_m \big)_m = 2\pi \int_{\mathcal{N}} \prec \psi_m | \slashed{\nu} \phi_m \succ_x \text{d}\mu_{\mathcal{N}}(x)
	\vspace{2mm}
	\eeq
	
	\noindent where $\nn$ is a Cauchy hypersurface of $\mm$ with normal vector field $\nu$. A simple current conservation argument shows that this scalar product is independent of the choice of $\nn$.
	By completion we can extend $(\hh_m, (\cdot | \cdot))$ to a Hilbert space. Thus, we can derive a spin connection out of the general Dirac matrices $G^{\mu}(x)$ as derived in \cite{FinsterU22}. With this spin derivative it is possible to re-write the Dirac operator 
	in the alternative form
	\beq
	\ce
	\Gg = iG^{\mu}\mathcal{D}_{\mu}
	\label{diracOp}
	\eeq
	with the spin connection
	\be
	\ce
	\mathcal{D}_{\mu} &= \dfrac{\partial}{\partial x^{\mu}} - i\mathcal{E}_{\mu} - \mathcal{B}_{\mu} \nonumber \\
	\mathcal{E}_{\mu} &= \dfrac{i}{2}\rho(\partial_{\mu}\rho) - \dfrac{i}{16} \text{Tr}_{\Ss}(G^{\alpha}\nabla_{\mu}G^{\beta})G_{\alpha}G_{\beta} + \dfrac{i}{8} \text{Tr}_{\Ss}(\rho G_{\mu}\nabla_{\alpha}G^{\alpha})\rho
	\label{diracEq}
	\vspace{4mm}
	\ee
	where $\epsilon_{jklm}$ the Levi-Civita 
	symbol (totally antisymmetric tensor), $\rho = \frac{i}{4!}\epsilon_{jklm}G^jG^kG^lG^m$ and Tr$_{\Ss}$ denots the trace over the spinor indices.\\

	\noindent In mathematical relativity, it has proved advantageous for calculations to choose a basis of the tangent bundle independent of the local coordinates. This procedure is sometimes called the tetrad formalism. Tetrads are pseudo-orthonormal frames, i.e., consisting of one unit time-like and three unit space-like vectors all perpendicular to each other. 
	We will start by defining the main objects in our four-dimensional space-time. Choose a local frame $\{\partial_{\mu}\}$ with $\mu \in \{0,1,2,3\}$ of $TM$. Denote the dual basis by $\{\dif{x}^{\mu}\}$ of $T^{\ast}M$.\\
	
	\begin{Def}
		Let $(M,g)$ be a four-dimensional Lorentzian manifold. A tetrad is a smooth local section of the psdeudo-orthonormal frame bundle. The tetrad can be written in a local coordinate frame $(\partial_\mu)_{\mu = 1, ..., n}$ as
		
		\be
		\ce
		e_{(a)} = e_{(a)}^{\mu}\partial_{\mu} \quad \text{and} \quad e^{(a)} = e^{(a)}_{\mu}\dif{x}^{\mu}.
		\ee
	\end{Def}
	
	\noindent Any local coordinate frame (which always exists) is mapped by the Gram-Schmidt algorithm (which is smooth) to a tetrad, whose existence is thereby granted over every coordinate neighborhood.
	Because our underlying manifold is four-dimensional, one can picture the transformation maps $e_{(a)}^{\mu}$ as $4\times 4$ matrices. We will label the general space-time coordinates with Greek letters and the local 
	Lorentz space-time with Latin letters in brackets.\newline
	
	\noindent The Newmann-Penrose formalism is a variant of the tetrad formalism, where instead of requiring orthonormality of the ${e_{(a)}}$, one chooses 
	in the complexified tangent bundle $T_{\C}M = TM \otimes \C$ with complex conjugation $J: T_{\C}M \longrightarrow T_{\C}M$ denoted by $\bar{m} = J(m)$, a local null basis comprising of a pair of real null vectors $l$ and $n$ as well as a pair of complex conjugate null vectors $m$ and $\bar{m}$ in the orthogonal complement of $l$ and $n$.

	\begin{Def}
		\label{defNP}
		Let $(M,g)$ be a four-dimensional Lorentzian manifold, $TM$ the corresponding tangent bundle and $T_{\C}M$ its complexification. A {\bf Newman-Penrose tetrad} or {\bf double null frame} is a frame consisting at each point of 
		two real valued vectors $l, n \in T M$ and a pair of complex conjugated vectors $m, \bar{m} \in T_\C M$, with
		\be
		\ce
		g(l,m) = g(l,\bar{m}) = g(n,m) = g(n,\bar{m}) =& 0 \nonumber \\
		g(l,l) = g(n,n) = g(m,m) = g(\bar{m},\bar{m}) =& 0
		\ee
		and
		\bes
		\ce
		g(l,n) = 1 , \qquad g(m,\bar{m}) = -1 .
		\ees
	\end{Def}
	\noindent For later usage, we want to ensure the existence and smoothness of such a tetrad:
	
	\begin{Prp}
		Locally, a smooth Newman-Penrose tetrad exists. 
	\end{Prp}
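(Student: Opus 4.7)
The plan is to build a Newman-Penrose tetrad explicitly from a pseudo-orthonormal tetrad $(e_0, e_1, e_2, e_3)$, whose local existence and smoothness has already been noted in the text (via Gram-Schmidt applied to any local coordinate frame). Writing the signature convention implicit in Definition~\ref{defNP} (i.e.\ $g(l,n) = +1$ together with $g(m,\bar m) = -1$) as $(+,-,-,-)$, so that $g(e_0,e_0)=1$ and $g(e_i,e_i) = -1$ for $i=1,2,3$, I would set
\begin{align*}
l := \tfrac{1}{\sqrt{2}}\bigl(e_0 + e_1\bigr), \qquad
n := \tfrac{1}{\sqrt{2}}\bigl(e_0 - e_1\bigr), \\
m := \tfrac{1}{\sqrt{2}}\bigl(e_2 + i\, e_3\bigr), \qquad
\bar m := \tfrac{1}{\sqrt{2}}\bigl(e_2 - i\, e_3\bigr),
\end{align*}
extending the base field to $\C$ on the last two slots, so that $m,\bar m \in T_\C M$ are indeed complex conjugates with respect to $J$.

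Next I would verify the defining relations by direct bilinear expansion. The isotropy of $l$ and $n$ follows from $g(e_0,e_0) + g(e_1,e_1) = 0$; analogously $g(m,m) = g(\bar m,\bar m) = 0$ from $g(e_2,e_2) + g(e_3,e_3) = 0$ (the cross terms vanish by $g(e_2,e_3)=0$). The cross pairings $g(l,m)$, $g(l,\bar m)$, $g(n,m)$, $g(n,\bar m)$ vanish because $e_0$, $e_1$ are orthogonal to $e_2$, $e_3$. Finally $g(l,n) = \tfrac{1}{2}\bigl(g(e_0,e_0) - g(e_1,e_1)\bigr) = 1$ and $g(m,\bar m) = \tfrac{1}{2}\bigl(g(e_2,e_2) + g(e_3,e_3)\bigr) = -1$, matching the normalization in Definition~\ref{defNP}.

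Smoothness is inherited automatically: the combinations above are constant-coefficient linear in the $e_{(a)}$, which are themselves smooth sections of the pseudo-orthonormal frame bundle over the chosen coordinate neighborhood, hence $l,n \in \Gamma(TM)$ and $m,\bar m \in \Gamma(T_\C M)$ are smooth.

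I do not anticipate a substantial obstacle; the only sensitivity is the sign convention, which must be tracked to make the right-hand sides of the pairings $g(l,n) = +1$ and $g(m,\bar m) = -1$ come out correctly. With the opposite metric convention $(-,+,+,+)$ one would instead take $l = (e_1 + e_0)/\sqrt{2}$ and $n = (e_1 - e_0)/\sqrt{2}$ and swap the sign role of $e_2,e_3$; this is just bookkeeping and introduces no real difficulty.
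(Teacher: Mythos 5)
Your proposal is correct and follows essentially the same route as the paper: Gram--Schmidt to obtain a smooth pseudo-orthonormal tetrad, then the standard constant-coefficient combinations $l,n = (e_0 \pm e_{(z)})/\sqrt{2}$ and $m,\bar m = (e_{(x)} \pm i e_{(y)})/\sqrt{2}$ (you merely relabel which spatial leg pairs with $e_0$, which is immaterial). The explicit verification of the pairings that you spell out is exactly what the paper leaves as ``one easily checks.''
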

	
	\begin{proof}
		Let $\{ \partial_{\mu}\}$ be a smooth local frame of $T_{\C}M$ where the elements of $\{ \partial_{\mu}\}$ do not here to be null vectors.
		Then we can apply Gram-Schmidt to those vectors and result in an smooth orthonormal basis $\{e_{(t)}, e_{(x)}, e_{(y)}, e_{(z)} \}$. Afterwards, we can define the double null frame via the orthonormal basis by the equations:
		\bes
		\ce
		l &= \dfrac{1}{\sqrt{2}}(e_{(t)} + e_{(z)}),  \qquad  n = \dfrac{1}{\sqrt{2}}(e_{(t)} - e_{(z)})\\
		m &= \dfrac{1}{\sqrt{2}}(e_{(x)} + i e_{(y)}), \qquad  \bar{m} = \dfrac{1}{\sqrt{2}}(e_{(x)} - i e_{(y)})
		\ees
		One easily checks that the above vectors fulfil all conditions from Definition \eqref{defNP}.
	\end{proof}
	
	\noindent We also single out a part of the corresponding gauge group that is isomorphic to $(\C\setminus \{ 0\}, \cdot)$ and that is later used to derive the so-called Cartan tetrad:

	\begin{Def}
		\label{lorenzTrafo}
		Let $(M,g)$ be a four-dimensional Lorentzian manifold. Let $C: M \rightarrow \C\setminus \{ 0 \}$. The {\bf rotation of class 3 corresponding to $C$} is the transformation
		
		\be
		\ce
		(l,n,m,\bar{m}) \mapsto (C \cdot l, C^{-1} \cdot n, \frac{C}{\vert C \vert} m, \frac{\bar{C}}{\vert C \vert} \bar{m}).
		\ee

	\end{Def}
	
	\subsection{Kerr-Newman spacetime and Eddington-Finkelstein-Coordinates}
	
	\noindent The Kerr-Newman metrics are the stationary axisymmetric solutions to the Einstein-Maxwell equations with one asymptotically flat end. Appropriate restrictions of the Kerr-Newman metrics form a $3$-dimensional manifold of Einstein-Maxwell solutions parametrized by the mass parameter $M$, the electric charge $Q$ and the angular momentum mass ratio $a = \frac{J}{M}$ [see \cite{jW}]. The underlying four-dimensional spacetime manifold is $ \mm = \R^2 \times S^2$ with coordinates chart $(t,r,\varphi, \theta)$. They are in the range $\R \times  (r_-; \infty) \times [0,2\pi) \times (0,\pi)$, for $r_- \in (0; \infty)$ defined below. We will use the signature $\text{sig}(+,-,-,-)$ and natural units. 
	The metric in Boyer-Lindquist coordinates reads (see \cite{ONeill}): 
	
	\be
	\ce
	g = &\bigg[ 1+ \dfrac{Q^2 - 2Mr}{\Sigma} \bigg] \dif{t}\otimes \dif{t}  - \sin ^2(\theta)\bigg[ r^2 + a^2 - \dfrac{a^2(Q^2 - 2Mr)}{\Sigma}\sin ^2(\theta) \bigg] \dif{\varphi}\otimes \dif{\varphi}  \nonumber \\
	& - \dfrac{\Sigma}{\Delta} \dif{r}\otimes \dif{r} - a\dfrac{\sin^2(\theta)}{\Sigma}\bigg[ Q^2 - 2Mr \bigg]\big(\dif{t}\otimes \dif{\varphi} + \dif{\varphi}\otimes \dif{t} \big) - \Sigma \dif{\theta}\otimes \dif{\theta} .
	\ee

	\beq
	\text{with} \quad \Delta(r) := r^2 - 2M r + a^2 + Q^2 \quad \text{and} \quad \Sigma(r, \theta) := r^2 + a^2\cos ^2(\theta). \nonumber 
	\vspace{2mm}
	\eeq
	
	\noindent $r_{\pm} := \pm \sqrt{M^2 - (a^2+Q^2)} +M$ are the two zero loci of $\Delta(r)$ in slow (i.e. $a^2+Q^2 < M^2$) Kerr-Newman spacetimes, they are positive.\newline
	\newline
	\noindent We have two coordinate singularities at $r_{\pm}$ which define two horizons, separating the subset \textbf{B$_{I}$} $ :=\{ x \in M \vert r(x) > r_+ \} $ outside of the black hole from the subset between event and Cauchy horizon
	\textbf{B$_{II}$} $:= \{ x \in M \vert r_- < r (x) < r_+ \} $. To get rid of the coordinate singularity at $r_+$, we introduce the Eddingtion-Finkelstein-Coordinates. 
	The tortoise coordinate $r_*$ is calculated from the tangent vectors of null geodesics. The latter have the form (see \cite{ChandBH} pp. 299 and 579)
	
	\bes
	\ce
	\dfrac{\dif{r}}{\dif{s}} = \dfrac{r^2 + a^2}{\Delta}E \qquad  \dfrac{\dif{r}}{\dif{s}} = \pm E \qquad \dfrac{\dif{\theta}}{\dif{s}} = 0 \qquad \dfrac{\dif{\varphi}}{\dif{s}} = \dfrac{a}{\Delta}E
	\vspace{2mm}
	\ees
	\noindent with E denoting the total energy and $s$ being an affine parameter. Consequently, by solving the equation $\frac{\dif{t}}{\dif{r}} = \pm \frac{r^2 + a^2}{\Delta}$ we can define the tortoise coordinates $r_{*}$
	
	\beq
	\ce
	r_{\star} := r + \dfrac{r_+^2 + a^2}{r_+ - r_-}\ln\big(|r-r_+|\big) - \dfrac{r_-^2 + a^2}{r_+ - r_-}\ln\big(|r-r_-|\big) 
	\label{EFr}
	\eeq
	\noindent As the space-time is for $a \neq 0$ not spherically symmetric but only axisymmetric, we have an additional tangent vector in  the $\phi$ direction. Thus, one has to transform the azimuthal angle
	
	\beq
	\ce
	\dfrac{\dif{\varphi}}{\dif{r}} = \pm \dfrac{a}{\Delta} \Longleftrightarrow \varphi = \pm \underbrace{\dfrac{a}{r_+ - r_-}\ln
		\bigg(\bigg| \dfrac{r-r_+}{r-r_-}\bigg|\bigg)}_{=:\tilde{\varphi}} + D	
	\label{EFphi}		
	\eeq
	with $D$ as a constant of integration. The sign corresponds to the distinction between ingoing and outgoing null geodesics. We will focus on the minus sign, which describes ingoing geodesics. We define a temporal function
	$\tau := t + r_{\star} - r$ and and an azimuthal coordinate $\phi := \varphi + \tilde{\varphi}$, forming the Eddington-Finkelstein coordinate system 
	$(\tau,r,\phi, \theta)$ in the range $\R \times (0,\infty) \times [0,2\pi) \times (0,\pi)$. 
	The metric expressed in these coordinates is
	
	\bes
	\ce
	g =& \bigg(1 + \dfrac{Q^2 - 2Mr}{\Sigma} \bigg) \dif{\tau}\otimes\dif{\tau} - \Sigma\dif{\theta} \otimes \dif{\theta} - \Sigma \sin^2(\theta) \dif{\phi} \otimes \dif{\phi} \\
	+ &\dfrac{Q^2-2Mr}{\Sigma}\bigg[\big(\dif{r} - a\sin^2(\theta)\dif{\phi}\big) \otimes \dif{\tau} + \dif{\tau} \otimes \big(\dif{r}-a\sin^2(\theta)\dif{\phi}\big)\bigg]  \\
	- & \bigg[ 1+ \dfrac{Q^2-2Mr}{\Sigma}\bigg]\big(\dif{r} - a \sin^2(\theta)\dif{\phi}\big) \otimes \big(\dif{r} - a \sin^2(\theta)\dif{\phi}\big)
	\vspace{2mm}
	\ees
	
	\noindent All hypersurfaces of constant $\tau$ are space-like:
	
	\begin{Lemma}[see \cite{R_ken_2017} for the Kerr case]
		$\tau$ is a temporal function on $B_{I} \cup B_{II}$.
		\vspace{2mm}
	\end{Lemma}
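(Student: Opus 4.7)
The plan is to verify that $d\tau$ is everywhere timelike on $B_I \cup B_{II}$, i.e.\ $g^{-1}(d\tau, d\tau) > 0$ in the signature $(+,-,-,-)$, which is equivalent to $\nabla \tau$ being a timelike gradient. The computation is cleanest in Boyer--Lindquist coordinates, since these cover $B_I$ and $B_{II}$ each as open subsets and the horizons $r = r_\pm$ are excluded from $B_I \cup B_{II}$ by definition. From \eqref{EFr}, a short partial-fraction computation using $r_+ + r_- = 2M$ and $r_+ r_- = a^2 + Q^2$ gives $dr_\star/dr = (r^2+a^2)/\Delta$, and therefore
\beq
d\tau \;=\; dt + \left(\frac{r^2+a^2}{\Delta} - 1\right) dr \;=\; dt + \frac{2Mr-Q^2}{\Delta}\, dr.
\eeq

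Contracting this with the standard inverse Boyer--Lindquist metric, for which $g^{tr}_{BL} = 0$, $g^{rr}_{BL} = -\Delta/\Sigma$ and $g^{tt}_{BL} = [(r^2+a^2)^2 - \Delta\, a^2 \sin^2\theta]/(\Sigma \Delta)$ (the off-diagonal $t\varphi$ block dropping out because $d\tau$ has no $d\varphi$ component), and bringing the result over a common denominator yields
\beq
g^{-1}(d\tau, d\tau) \;=\; \frac{(r^2+a^2)^2 - (2Mr-Q^2)^2 - \Delta\, a^2 \sin^2\theta}{\Sigma \, \Delta}.
\eeq
The key algebraic step is the factorization $(r^2+a^2)^2 - (2Mr-Q^2)^2 = \Delta \cdot (r^2 + 2Mr + a^2 - Q^2)$, which cancels the apparent $1/\Delta$ singularity at the horizons and reduces the expression to
\beq
g^{-1}(d\tau, d\tau) \;=\; \frac{r^2 + 2Mr + a^2 \cos^2\theta - Q^2}{\Sigma}.
\eeq

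What remains is to prove positivity of the numerator $p(r,\theta) := r^2 + 2Mr + a^2\cos^2\theta - Q^2$ on the region $r > r_-$. Since $\partial_r p = 2(r+M)$ is positive on $B_I \cup B_{II}$ (where $r > r_- > 0 > -M$), $p(\cdot, \theta)$ is strictly monotone increasing, so it suffices to verify $p(r_-, \theta) \geq 0$. Exploiting $\Delta(r_-) = 0$, i.e.\ $2Mr_- = r_-^2 + a^2 + Q^2$, one obtains
\beq
p(r_-, \theta) \;=\; 2r_-^2 + a^2(1+\cos^2\theta) \;\geq\; 0,
\eeq
so $p(r, \theta) > 0$ strictly for $r > r_-$, which closes the argument. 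The main subtlety lies in the factorization that removes the apparent horizon singularity; the reduction to $r = r_-$ via monotonicity together with $\Delta(r_-) = 0$ is the clean way to avoid a clumsy root analysis of $p(\cdot,\theta)$ in the parameters $M, a, Q$.
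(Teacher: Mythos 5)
Your proof is correct, but it takes a genuinely different (dual) route from the paper's. The paper works entirely in Eddington--Finkelstein coordinates and shows that the level sets of $\tau$ are spacelike by applying Sylvester's criterion to the induced $3\times 3$ metric $-g^{EF}|_{\tau=\mathrm{const}}$ in the basis $(r,\phi,\theta)$, computing the three leading principal minors; you instead compute the causal character of $d\tau$ directly, i.e.\ $g^{-1}(d\tau,d\tau)$, in Boyer--Lindquist coordinates, exploiting $g^{tr}_{BL}=0$ and the absence of a $d\varphi$-component in $d\tau$. The two statements (spacelike level sets versus timelike gradient) are equivalent given $d\tau\neq 0$, and remarkably both computations funnel into positivity of the very same scalar: your numerator $r^2+2Mr+a^2\cos^2\theta-Q^2$ is exactly the paper's $\Sigma+2Mr-Q^2$, which appears there as (a factor of) each principal minor. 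What your approach buys is a single scalar identity with an explicit difference-of-squares factorization cancelling the $1/\Delta$, plus a cleaner positivity argument (monotonicity of $p$ in $r$ together with evaluation at $r_-$ via $\Delta(r_-)=0$, versus the paper's explicit root computation $\rho_\pm=-M\pm\sqrt{M^2+Q^2-a^2\cos^2\theta}$); what the paper's approach buys is that it is performed in the horizon-regular chart, so positive definiteness is obtained for all $r\geq r_-$ including $r=r_+$ at no extra cost, whereas your Boyer--Lindquist computation literally covers only $B_I\cup B_{II}$ as defined (which excludes $r=r_+$) and would need a one-line continuity remark --- available precisely because your factorization makes the final expression $(\Sigma+2Mr-Q^2)/\Sigma$ manifestly regular across the horizon --- to extend to the full Eddington--Finkelstein chart. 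Neither proof addresses the time-orientation of $\operatorname{grad}\tau$ (i.e.\ that $\tau$ increases to the future), so you are not missing anything the paper supplies.
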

	
	\begin{proof}
		In the basis $(r, \phi, \theta)$, we want to show positive-definiteness for $ -g^{EF}\big|_{\tau = \text{const.}}$ of 
		
		\beq
		\ce
		A: = 
		\begin{pmatrix}
			+1 - \dfrac{Q^2-2Mr}{\Sigma} & -\bigg(1- \dfrac{Q^2-2Mr}{\Sigma}\bigg)a\sin ^2(\theta) & 0 \\
			-\bigg(1- \dfrac{Q^2-2Mr}{\Sigma}\bigg)a\sin ^2(\theta) & \sin ^2(\theta)\bigg[ r^2 + a^2 + \dfrac{a^2(2Mr-Q^2)}{\Sigma}\sin ^2(\theta)\bigg] & 0 \\
			0 & 0 & \Sigma
		\end{pmatrix} \nonumber.
		\vspace{2mm}
		\eeq
		Sylvester's criterion tells us that is enough to show positivity of the determinant of the diagonal minors of $A$. Starting with $\det_{11} (A)$ and knowing that $\Sigma > 0 \, \forall \, r$, we focus on $(\Sigma + 2Mr - Q^2)$. Inserting for $\Sigma$ and investigating the roots of the polynomial in $r$, we obtain the solutions $\rho_\pm = - M \pm \sqrt{M^2 + Q^2 - a^2\cos^2(\theta)}$. Moreover,
		
		\be
		\ce
		\rho_+ &= - M + \sqrt{M^2 + Q^2 - a^2\cos^2(\theta)} = - (M - \sqrt{M^2 + Q^2 - a^2\cos^2(\theta)}) \leq \nonumber \\
		&\leq M - \sqrt{M^2 + Q^2 - a^2\cos^2(\theta)} \leq M - \sqrt{M^2 - Q^2 - a^2} = r_- \nonumber
		\vspace{2mm}
		\ee
		This implies $\det_{11} (A)> 0 \, \forall\, r \geq r_-$. After some calculation we see that $\det_{22} (A) = \Sigma + 2Mr - Q^2$. By the same argument we
		have positivity $\forall \, r \geq r_-$. Since $\det_{33} (A) = \Sigma(\Sigma + 2Mr - Q^2) $ we have that $-g^{EF}\big|_{\tau = \text{const.}}$ is 
		positive definite $\forall \, r \geq r_-$. 
	\end{proof}
	
	\subsection{Computing the Cartan Tetrad in Eddingtion-Finkelstein-Coordinates}
	
	\noindent Now, we compute the Dirac operator in the Kerr-Newmann geometry. We start defining the Cartan tetrad which will be used to derive the operator in a matrix representation.
	We want to highlight that the general Dirac matrices are not uniquely defined by the anti-commutation relation \eqref{diracMat}. We can use this freedom to choose the matrices in a 
	way which simplifies the calculations. This can be done without any problems, since all possible sets of general Dirac matrices, satisfying 
	the anti-commutation relation \eqref{diracMat}, result in unitarily equivalent Dirac operators [see \cite{FinsterU22}].
	
	\noindent From a double null frame $\{l,n,m, \bar{m}\}$ we can define a real, orthonormal tetrad $u_{(a)}$. We will label the general space-time coordinates with Greek letters and the local Lorentz space-time with Latin letters in brackets:
	
	\be
	\ce
	u_{(0)} &= \dfrac{1}{\sqrt{2}}(l+n) \qquad u_{(1)} = \dfrac{1}{\sqrt{2}}(m + \bar{m})\nonumber \\
	u_{(2)} &= \dfrac{1}{\sqrt{2}i}(m- \bar{m})  \qquad    u_{(3)} = \dfrac{1}{\sqrt{2}}(l-n) 
	\label{ONBasis}
	\vspace{2mm}
	\ee
	
	\noindent Since we have an orthonormal frame the dyad metric is Minkowskian
	
	\beq
	\ce
	g_{\mu \nu}(x)u^{\mu}_{(a)}(x)u^{\mu}_{(b)}(x) = \eta_{(a)(b)} \qquad \eta^{(a)(b)}	u^{\mu}_{(a)}(x)u^{\mu}_{(b)}(x) = g^{\mu \nu}(x) 
	\vspace{2mm}
	\eeq
	
	\noindent with $\eta_{\mu\nu} = \eta^{\mu\nu} = \text{diag}(1,-1,-1,-1)$. Furthermore, we choose the gamma matrices to be in Weyl representation of the Minkowski space. They have the form
	
	\beq
	\ce
	\gamma^{(0)} = 
	\begin{bmatrix}
		0 & - \Idmat_{2 \times 2} \\
		- \Idmat_{2 \times 2} & 0 \\
	\end{bmatrix}
	\qquad 
	\gamma^{(i)} = 
	\begin{bmatrix}
		0 & -\sigma^{(i)} \\
		-\sigma^{(i)} & 0 \\
	\end{bmatrix} \nonumber
	\vspace{2mm}
	\eeq
	
	\noindent where $\sigma^{(i)}$ are the Pauli matrices and $i \in \{1,2,3\}$
	
	\beq
	\ce
	\sigma^{(1)} = 
	\begin{pmatrix}
		0 & 1 \\
		1 & 0\\
	\end{pmatrix}
	\qquad
	\sigma^{(2)} = 
	\begin{pmatrix}
		0 & -i \\
		i & 0\\
	\end{pmatrix}
	\qquad
	\sigma^{(3)} = 
	\begin{pmatrix}
		1 & 0 \\
		0 & -1\\
	\end{pmatrix} \nonumber
	\vspace{2mm}
	\eeq
	\noindent The $\gamma^{(a)}$ matrices satisfy the anti-commutation relation
	
	\beq
	\ce
	\eta^{(a)(b)} = \dfrac{1}{2} \big\{\gamma^{(a)}, \gamma^{(b)} \big\} . \nonumber
	\eeq
	
	\noindent We define generalized Dirac matrices, associated to the Kerr-Newmann metric:
	
	\beq
	\ce
	G^{\mu}(x) := u^{\mu}_{(a)}\gamma^{(a)} .
	\vspace{2mm}
	\eeq
	
	\V Indeed, the above defined matrices $G^{\mu}(x)$ satisfy the anti-commutation relations \eqref{diracMat}:
	
	\beq
	\ce
	\dfrac{1}{2}\bigg\{ G^{\mu}(x), G^{\nu}(x) \bigg\} = \dfrac{1}{2} u^{\mu}_{(a)}u^{\nu}_{(b)} \bigg\{\gamma^{(a)}, \gamma^{(b)} \bigg\} = u^{\mu}_{(a)}u^{\nu}_{(b)} \eta^{(a)(b)} = g^{\mu \nu}(x) .
	\eeq

	\V In this formulation, the spin inner product takes the form
	
	\be \label{spininner}
		\Sl \psi | \phi \Sr_x = \la \psi, \begin{pmatrix} 0 & \Idmat_{\C^2} \\ \Idmat_{\C^2} & 0 \end{pmatrix} \phi \ra_{\C^4} \:.
	\ee
	
	\noindent We now choose a certain frame for our computation. We start with a symmetric frame and apply a coordinate transformation into Eddington-Finkelstein coordinates. 
	In a last step we act with a rotation of class 3 (see Def. \ref{lorenzTrafo}) associated to a function $C = u \cdot e^{i \omega}$ for real functions $u, \omega$ to get rid of remaining singularities.
	
	\bes
	\ce
	l &= \dfrac{1}{\sqrt{2\Sigma|\Delta|}}\bigg( (r^2 + a^2)\partial_t + \Delta\partial_r + a \partial_{\varphi} \bigg) \\
	n &= \dfrac{\epsilon(\Delta)}{\sqrt{2\Sigma |\Delta|}}\bigg( (r^2 + a^2)\partial_t - \Delta\partial_r + a \partial_{\varphi} \bigg) \\
	m &= \dfrac{1}{\sqrt{2\Sigma}}\bigg( ia \sin(\theta)\partial_t + \partial_{\theta} + i \csc(\theta) \partial_{\varphi} \bigg) \\
	\bar{m} &= \dfrac{1}{\sqrt{2\Sigma}}\bigg( -ia \sin(\theta)\partial_t + \partial_{\theta} - i \csc(\theta) \partial_{\varphi} \bigg) .
	\ees
	
	\noindent Next, we transform the tetrad into its contra-variant form and afterwards apply the coordinate transformation from equation \eqref{EFr} and \eqref{EFphi} This results in
	
	\bes
	\ce
	l_{ED}^D &= \epsilon(\Delta)\sqrt{\dfrac{|\Delta|}{2\Sigma}}\bigg[ \dif{\tau} + \bigg(1-\dfrac{2\Sigma}	{\Delta}\bigg)\dif{r} -a\sin ^2(\theta)\dif{\phi} \bigg] \\
	n_{ED}^D &= \sqrt{\dfrac{|\Delta|}{2\Sigma}}\bigg[\dif{\tau} + \dif{r} -a\sin ^2(\theta)\dif{\phi}\bigg] \\
	m_{ED}^D &= \dfrac{1}{\sqrt{2\Sigma}}\bigg[ia\sin(\theta)\big(\dif{\tau} + \dif{r}\big) - \Sigma\dif{\theta} - i\big( r^2 + a^2 \big)\sin(\theta)\dif{\phi} \bigg]  \\
	\bar{m}_{ED}^D  &= -\dfrac{1}{\sqrt{2\Sigma}}\bigg[ia\sin(\theta)\big(\dif{\tau} + \dif{r}\big) + \Sigma\dif{\theta} - i\big( r^2 + a^2 \big)\sin(\theta)\dif{\phi} \bigg] .
	\ees
	
	\noindent Following, we transform the tetrad in its covariant form and apply another general Lorentz transformation of the form $u = \frac{\sqrt{|\Delta|}}{r_+}, \quad \omega = 0$. Finally, we end up with the 
	covariant Newman-Penrose tetrad in Eddington-Finkelstein-Coordinates for the Kerr-Newman space-time 
	
	\be
	\ce
	l' &= \dfrac{1}{\sqrt{2\Sigma}r_+}\bigg[ \big(2r^2 +2a^2 - \Delta \big)\partial_{\tau} + \Delta\partial_r + 2a\partial_{\phi} \bigg] \nonumber \\
	n' &= \dfrac{r_+}{\sqrt{2\Sigma}}\bigg[\partial_{\tau} - \partial_r \bigg] \nonumber \\
	m' &= \dfrac{1}{\sqrt{2\Sigma}}\bigg[ia\sin(\theta)\partial_{\tau} + \partial_{\theta} + i\csc(\theta)	\partial_{\phi} \bigg] \nonumber \\
	\bar{m}' &= -\dfrac{1}{\sqrt{2\Sigma}}\bigg[ia\sin(\theta)\partial_{\tau} - \partial_{\theta} + i\csc(\theta)\partial_{\phi} \bigg]
	\label{tetradCov}
	\ee
	
	\noindent and its contra variant form
	
	\be
	\ce
	l'_D &= \dfrac{1}{\sqrt{2\Sigma}r_+}\bigg[ \Delta\dif{\tau} + \big(\Delta -2\Sigma \big)\dif{r} -a\Delta\sin ^2(\theta)	\dif{\phi} \bigg] \nonumber \\
	n'_D &= \dfrac{r_+}{\sqrt{2\Sigma}}\bigg[\dif{\tau} + \dif{r} -a\sin ^2(\theta)\dif{\phi}\bigg] \nonumber \\
	m'_D &= \dfrac{1}{\sqrt{2\Sigma}}\bigg[ia\sin(\theta)\big(\dif{\tau} + \dif{r}\big) - \Sigma \dif{\theta} - i\big( r^2 + a^2 \big)\sin(\theta)\dif{\phi} \bigg] \nonumber \\
	\bar{m}'_D  &= -\dfrac{1}{\sqrt{2\Sigma}}\bigg[ia\sin(\theta)\big(\dif{\tau} + \dif{r}\big) + \Sigma\dif{\theta} - i\big( r^2 + a^2 \big)\sin(\theta)\dif{\phi} \bigg] .
	\label{tetradContr}
	\vspace{2mm}
	\ee
	
	\subsection{Computing the Dirac Equation}
	
	\noindent We plug in the the orthonormal frames \eqref{tetradContr} and \eqref{tetradCov} into \eqref{diracEq} and compute the Dirac operator. 
	Due to our choice the computations simplifies a lot because $\rho \equiv \gamma^5 = i \gamma^0\gamma^1\gamma^2\gamma^3$ is a constant factor which reduces
	$\mathcal{E}_{\mu}$ [see \cite{Finster_2000}]:
	
	\beq
	\ce
	\mathcal{E}_{\mu} = -\dfrac{i}{16}\text{Tr}_{\Ss}(G^{\alpha}\nabla_{\mu}G^{\beta})G_{\alpha}G_{\beta} .
	\vspace{2mm}
	\eeq
	
	\noindent With the the commutation relation \eqref{diracMat} and its consequence $4\nabla_{\alpha}g^{\mu \nu} = \text{Tr}_{\Ss}[(\nabla_{\alpha}G^{\mu})G^{\nu}] + \text{Tr}_{\Ss}[G^{\mu}(\nabla_{\alpha}G^{\nu})]$, we can re-write the trace term as in \cite{Finster_2000}:
	
	\beq
	\ce
	\text{Tr}_{\Ss}(G^{\alpha}\nabla_{\mu}G^{\beta})G^{\mu}G_{\alpha}G_{\beta} = -8\nabla_{\mu}G^{\mu} + i\epsilon^{\mu \alpha \beta \delta}
	\text{Tr}_{\Ss}(G_{\alpha}\nabla_{\mu}G_{\beta})\gamma^5G_{\delta} . \nonumber
	\vspace{2mm}
	\eeq
	Given the Levi-Civita connection is torsion free, one can replace the covariant derivative by a partial derivative. In the end we have
	
	\beq
	\ce
	B := G^{\mu}E_{\mu} = \dfrac{i}{2\sqrt{|g|}}\partial_{\mu}\big(\sqrt{|g|}u^{\mu}_{(a)})\gamma^{(a)} 
	-\dfrac{1}{4}\epsilon^{\mu \alpha \beta \delta}\eta^{(a)(b)}u_{(a)\alpha}\big(\partial_{\mu}u_{(b)\beta}\big)u_{(c)\delta}\gamma^{(c)}\gamma^5 .
	\label{BTerm}
	\vspace{2mm}
	\eeq
	
	\noindent In the Appendix (Sec. \ref{erste}) we compute
	
	\be
	\label{ErsteGleichung}
	\ce
	B &= \dfrac{i(r-M)}{2 \sqrt{\Sigma}r_+}\bigg[\gamma^{0} + \gamma^{(3)}\bigg] +\dfrac{i\cot(\theta)}{2\sqrt{{\Sigma}}} \gamma^{(1)} + \dfrac{ia^2}{2\Sigma^{3/2}}\cos(\theta)\sin(\theta)\gamma^{(1)}  \\ \nonumber
	&+ \dfrac{ir}{4\Sigma^{3/2}r_+}\bigg[ (\Delta - r_+^2)\gamma^{(0)} + (\Delta + r_+^2)\gamma^{(3)}\bigg] \\ \nonumber
	&+ \dfrac{a(\Delta - r_+^2)}{4\Sigma^{3/2}r_+}\cos(\theta)\gamma^{(0)}\gamma^5 + \dfrac{a(\Delta + r_+^2)}{4\Sigma^{3/2}r_+}\cos(\theta)\gamma^{(3)}\gamma^5 \\ \nonumber
	&+ \dfrac{r a \sin(\theta)}{2\Sigma^{3/2}}\gamma^{(1)}\gamma^5 .
	\vspace{2mm}
	\ee
	
	\noindent Additionally, we need to compute a second term containing the partial derivatives
	\bes
	\ce
	iG^{\mu}\dfrac{\partial}{\partial x^{\mu}} &= i\gamma^{(0)}\bigg[u^0_{(0)}\partial_{\tau} + u^1_{(0)}\partial_{r} + u^3_{(0)}\partial_{\phi}\bigg]  \\
	&+ i\gamma^{(1)}u^2_{(1)}\partial_{\theta}  + i\gamma^{(2)}\bigg[u^0_{(2)}\partial_{\tau} + u^3_{(2)}\partial_{\phi}\bigg]  \\
	&+ i\gamma^{(3)}\bigg[u^0_{(3)}\partial_{\tau} + u^1_{(3)}\partial_{r} + u^3_{(3)}\partial_{\phi}\bigg] .
	\vspace{2mm}
	\ees
	
	\noindent With the definition in \eqref{diracOp} we show in the Appendix (\ref{zweite}) that the Dirac Operator has the following matrix representation
	\beq
	\label{ZweiteGleichung}
	\ce
	G = \begin{bmatrix}
		0 & 0 & \alpha_1 & \beta_- \\
		0 & 0 & \beta_+ & \alpha_0& \\
		\bar{\alpha}_0 &\bar{\beta}_+  & 0& 0\\
		\bar{\beta}_- & \bar{\alpha}_1 & 0& 0\\
	\end{bmatrix}, \qquad \text{with}
	\vspace{2mm}
	\eeq
	
	\be
	\ce
	\alpha_1 :=& -\dfrac{i}{\sqrt{\Sigma}r_+}\bigg[ \big(  2 r^2 + 2a^2 - \Delta\big)\partial_{\tau} + 2a\partial_{\phi} \bigg] \nonumber \\
	&-\dfrac{\Delta}{\sqrt{\Sigma}r_+}\bigg[ i \partial_r + i\dfrac{r-M}{\Delta} + \dfrac{i}{2\Sigma}\big( r - ia	\cos(\theta)\big) \bigg] \nonumber \\
	\alpha_0 :=& -\dfrac{ir_+}{\sqrt{\Sigma}}\bigg[\partial_{\tau} - \partial_r \bigg] + \dfrac{ir_+}{2\Sigma^{3/2}}\bigg[r - ia\cos(\theta)\bigg]\nonumber \\
	\bar{\alpha}_1 :=& -\dfrac{i}{\sqrt{\Sigma}r_+}\bigg[ \big( 2 r^2 + 2a^2 - \Delta \big)\partial_{\tau} + 2a\partial_{\phi} 	\bigg] 	\nonumber \\
	&-\dfrac{\Delta}{\sqrt{\Sigma}r_+}\bigg[ i \partial_r + i\dfrac{r-M}{\Delta} + \dfrac{i}{2\Sigma}\big( r + ia	\cos(\theta)\big) \bigg] \nonumber \\
	\bar{\alpha}_0 :=& -\dfrac{ir_+}{\sqrt{\Sigma}}\bigg[\partial_{\tau} - \partial_r \bigg] + -\dfrac{ir_+}{2\Sigma^{3/2}}\bigg[r + ia\cos(\theta)\bigg] \nonumber \\
	\beta_- :=& -\dfrac{1}{\sqrt{\Sigma}}\bigg[i \partial_{\theta} + i \dfrac{\cot(\theta)}{2} + \dfrac{a\sin(\theta)}{2\Sigma}	\big(r - ia\sin(\theta)\big)\bigg] \nonumber \\
	&- \dfrac{1}{\sqrt{\Sigma}}\big(a\sin(\theta)\partial_{\tau} + \csc(\theta)\partial_{\phi} \big) \nonumber \\
	\beta_+ :=& -\dfrac{1}{\sqrt{\Sigma}}\bigg[i \partial_{\theta} + i \dfrac{\cot(\theta)}{2} + \dfrac{a\sin(\theta)}{2\Sigma}	\big(r - ia\sin(\theta)\big)\bigg] \nonumber \\
	&+ \dfrac{1}{\sqrt{\Sigma}}\big(a\sin(\theta)\partial_{\tau} + \csc(\theta)\partial_{\phi} \big) \nonumber \\
	\bar{\beta}_- :=& \dfrac{1}{\sqrt{\Sigma}}\bigg[i \partial_{\theta} + i \dfrac{\cot(\theta)}{2} - \dfrac{a\sin(\theta)}{2\Sigma}\big(r + ia\sin(\theta)\big)\bigg] \nonumber \\
	&- \dfrac{1}{\sqrt{\Sigma}}\big(a\sin(\theta)\partial_{\tau} + \csc(\theta)\partial_{\phi} \big) \nonumber \\
	\bar{\beta}_+ :=& \dfrac{1}{\sqrt{\Sigma}}\bigg[i \partial_{\theta} + i \dfrac{\cot(\theta)}{2} - \dfrac{a\sin(\theta)}{2\Sigma}\big(r + ia\sin(\theta)\big)\bigg] \nonumber \\
	&+ \dfrac{1}{\sqrt{\Sigma}}\big(a\sin(\theta)\partial_{\tau} + \csc(\theta)\partial_{\phi} \big) .
	\vspace{2mm}
	\ee
	
	\noindent We can transform the Dirac equation into a simpler form. To this aim, we define the transformation matrix $D := \text{diag}(\bar{\delta}^{1/2}, (\bar{\delta}|\Delta|)^{1/2}, (\delta|\Delta|)^{1/2}, \delta^{1/2})$ 
	with $\delta := (r+ia\cos(\theta))$ and $\bar{\delta}:= (r-ia\cos(\theta))$. Furthermore set $\Gamma := -i\text{diag}(\delta, - \delta, -\bar{\delta}, \bar{\delta})$ and $\hat{\Psi} := D\Psi$, then we get
	\beq
	\label{DritteGleichung}
	\Gamma D \big(G-m\big) D^{-1} \hat{\Psi} = 
	\begin{bmatrix}
		i\delta m & 0 & \sqrt{|\Delta|}^{-1}\tilde{\mathcal{D}_1} &\tilde{\mathcal{L}_+} \\
		0 & -i\delta m & -\tilde{\mathcal{L}_-} &\sqrt{|\Delta|}\tilde{\mathcal{D}_0}\\
		\sqrt{|\Delta|}\tilde{\mathcal{D}_0} & \tilde{\mathcal{L}_+} & -i\bar{\delta}m & 0 \\
		-\tilde{\mathcal{L}_-} & \sqrt{|\Delta|}^{-1}\tilde{\mathcal{D}_1} & 0 & i\bar{\delta}m \\
	\end{bmatrix}\hat{\Psi}, 
	\eeq
	\noindent with the differential operators
	\be
	\ce
	\tilde{\mathcal{D}_1} &= \dfrac{1}{r_+} \bigg[ \big( 2 r^2 + 2a^2 - \Delta \big) \partial_{\tau} + \Delta\partial_r + 2a\partial_{\phi}	\bigg] \nonumber \\
	\tilde{\mathcal{D}_0} &=-r_+\bigg[\partial_{\tau} - \partial_r \bigg] \nonumber \\
	\tilde{\mathcal{L}_{\pm}} &=  \partial_{\theta} + \dfrac{\cot(\theta)}{2}  \mp i \big(a\sin(\theta)\partial_{\tau} + \csc(\theta)\partial_{\phi} \big) .
	\ee
	
	\V This is shown in the Appendix, Sec. \ref{dritte}.
	
	\subsection{Separation of the Dirac Equation}
	
	As in \cite{ChandBH} for the equation in Kerr spacetime, we can separate Eq. \ref{DritteGleichung} into angular and radial equations and thus can employ the eigenvalue problems 
	
	\bes
	\ce
	\mathcal{R}(r)\hat{\Psi} = \xi \hat{\Psi} \quad \text{and} \quad \mathcal{A}(\theta)\hat{\Psi} = - \xi \hat{\Psi}
	\ees
	with the solution
	\be
	\ce
	\hat{\Psi}(\tau,r,\theta,\phi) = e^{-i\omega\tau}e^{-ik\phi}&\hat{\Phi}(r,\theta) \quad \text{with:} \quad \omega \in \R, k \in (\Z +1/2)\nonumber \\
	\hat{\Phi}(r,\theta) &= \begin{pmatrix}
		\tilde{X}(r)_2Y(\theta)_2 \\
		\tilde{X}(r)_1Y(\theta)_1 \\
		\tilde{X}(r)_1Y(\theta)_2 \\
		\tilde{X}(r)_2Y(\theta)_1 \\
	\end{pmatrix},
	\vspace{2mm}
	\ee
	where $\xi$ is the constant of separation. The matrices $\mathcal{R}$ and $\mathcal{A}$ are defined as
	
	\be
	\ce
	\mathcal{R}(r) = 
	\begin{bmatrix}
		imr & 0 & \sqrt{|\Delta|}^{-1}\mathcal{D}_1 & 0 \\
		0 & -imr & 0 & \sqrt{|\Delta|}\mathcal{D}_0 \\
		\sqrt{|\Delta|}\mathcal{D}_0 & 0 & -imr & 0 \\
		0 & \sqrt{|\Delta|}^{-1}\mathcal{D}_1 & 0 & imr\\
	\end{bmatrix}
	\ee
	and
	\begin{spreadlines}{5mm}
		\be
		\ce
		\mathcal{A}(\theta) = 
		\begin{bmatrix}
			-am\cos(\theta) & 0 & 0 & \mathcal{L}_+ \\
			0& am\cos(\theta)& -\mathcal{L}_- & 0 \\
			0 & \mathcal{L}_+ & -am\cos(\theta) & 0 \\
			-\mathcal{L}_- & 0 & 0 & am\cos(\theta)\\
		\end{bmatrix} .
		\ee		
		The differential operators have the form
	\end{spreadlines}
	\be
	\ce
	\mathcal{D}_1 &= -\dfrac{1}{r_+} \bigg[ \big( 2 r^2 + 2a^2 - \Delta \big) i\omega - \Delta\partial_r + 2aki\bigg] \nonumber \\
	\mathcal{D}_0 &= r_+\bigg[i\omega + \partial_r \bigg] \nonumber \\
	\mathcal{L_{\pm}} &=  \partial_{\theta} + \dfrac{\cot(\theta)}{2}  \mp  \big(a\omega\sin(\theta)+ k\csc(\theta) \big) .
	\vspace{2mm}
	\ee	 
	In the end, the equations decouple into two systems of ordinary differential equations
	
	\begin{spreadlines}{10mm}
		\be
		\label{XGleichung}
		\ce
		\begin{bmatrix}
			\sqrt{|\Delta|}^{-1} \mathcal{D}_1 & imr - \xi \\
			-(imr +\xi) & \sqrt{|\Delta|}\mathcal{D}_0 \\
		\end{bmatrix}
		\begin{pmatrix}
			\tilde{X}_1(r)\\
			\tilde{X}_2(r)\\
		\end{pmatrix}
		&= 0,
		\ee
		
		\be	
		\label{YGleichung}	
		\ce
		\begin{bmatrix}
			\mathcal{L}_+ &  -am\cos(\theta) + \xi \\
			am\cos(\theta) + \xi & -\mathcal{L}_- \\
		\end{bmatrix}
		\begin{pmatrix}
			Y_1(\theta)\\
			Y_2(\theta)\\
		\end{pmatrix}
		&= 0 .
		\ee
	\end{spreadlines}

	\noindent By applying another partial derivative in $\theta$ to the angular ordinary differential equation \ref{YGleichung} we obtain the second-order Chandrasekhar-Page equation \cite{ChandBH}. Its solutions are usually referred to as spin-$\frac{1}{2}$ 
	spheroidal harmonics. In the limit $a \searrow 0$, these solutions reduce to the spin-weighted spherical harmonics for the spin-$\frac{1}{2}$ case \cite{GoldSphe}. For future work, we were only interested in that the matrix valued differential
	operator $\mathcal{A}$ has a spectrum decomposition with smooth eigenfunctions and discrete non-degenerate eigenvalues. Define $Y(\theta) := (Y_1(\theta), Y_2(\theta))$.
	
	\begin{Prp}
		For any $\omega \in \R$ and $k \in (\Z + 1/2)$ the differential operator $\mathcal{A}$ has a complete set of orthonormal eigenfunctions $(\hat{Y}_n)_{n \in \Z} \subset L^2((0,\pi), \sin(\theta)d\theta)$ that are bounded and smooth away from the poles.
		The corresponding eigenvalues $\xi_n$ are real-valued, discrete and non-degenerated. Both the eigenfunctions and the eigenvalues depend smoothly on $\omega$.
	\end{Prp}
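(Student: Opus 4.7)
The plan is to realize $\mathcal{A}$ as a densely defined symmetric operator with compact resolvent on the Hilbert space $H := L^2((0,\pi), \sin\theta\,d\theta;\C^2)$, thereby producing a complete orthonormal eigenbasis and a discrete real spectrum, and then invoke analytic perturbation theory to control the $\omega$-dependence. To set up the functional-analytic framework, I first observe that on $L^2((0,\pi), \sin\theta\,d\theta;\C)$ one has $(\partial_\theta + \tfrac{1}{2}\cot\theta)^{*} = -(\partial_\theta + \tfrac{1}{2}\cot\theta)$ by integration by parts; since the multiplication term $a\omega\sin\theta + k\csc\theta$ is real, this gives $\mathcal{L}_{\pm}^{*} = -\mathcal{L}_{\mp}$ on $C^{\infty}_{c}((0,\pi))$. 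Combined with the Hermiticity of the zeroth-order matrix involving $am\cos\theta$, a short computation then shows that the restriction of $\mathcal{A}$ to the $(Y_1,Y_2)$-system is formally symmetric on $C^{\infty}_{c}((0,\pi);\C^{2})$.

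Next I would verify essential self-adjointness and compactness of the resolvent. To this end I carry out a Frobenius analysis of the $2\times 2$ system near each regular-singular endpoint. Near $\theta = 0$ the leading singular coefficients in $\mathcal{L}_{\pm}$ are $(\tfrac{1}{2}\mp k)/\theta$; the two fundamental solution branches behave like $\theta^{|k|-1/2}$ and $\theta^{-|k|-1/2}$ (up to coupling), and because $k \in \Z + \tfrac{1}{2}$ is half-integer, we never land in a borderline case: exactly one branch is square-integrable against $\sin\theta\,d\theta$. This puts $\mathcal{A}$ in the Weyl limit-point case at $\theta = 0$, and the analogous argument applies at $\theta = \pi$. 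By the limit-point criterion for first-order symmetric systems (Weidmann), $\mathcal{A}$ is essentially self-adjoint. A standard Rellich-type compact embedding argument (first-order control on compact subintervals, combined with the endpoint decay forced by the limit-point condition) then yields compactness of the resolvent, so the spectrum is purely discrete and real and the eigenfunctions form a complete orthonormal set $(\hat{Y}_n)_{n\in\Z}\subset H$. Smoothness away from the poles follows from the smoothness of the coefficients combined with classical ODE regularity, and non-degeneracy follows because the solution space of the first-order $2\times 2$ system is two-dimensional while the limit-point condition selects a one-dimensional $L^{2}$-subspace at each endpoint.

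For the smooth dependence on $\omega$, observe that $\mathcal{A}(\omega)$ depends polynomially (hence analytically) on $\omega$ through the bounded multiplication operator $a\omega\sin\theta$ appearing in $\mathcal{L}_{\pm}$, while all unbounded contributions are $\omega$-independent. Therefore $\{\mathcal{A}(\omega)\}_{\omega \in \R}$ is a self-adjoint analytic family of type (A) in Kato's sense. By Kato's analytic perturbation theory, combined with the non-degeneracy established above, each eigenvalue $\xi_n(\omega)$ and each normalized eigenfunction $\hat{Y}_n(\omega)$ depends real-analytically—and in particular $C^{\infty}$—on $\omega$.

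I expect the main obstacle to lie in the Frobenius analysis at the singular endpoints: the two components $Y_{1}, Y_{2}$ will generally have different indicial exponents, the coupling through the off-diagonal first-order term has to be unraveled carefully, and one must verify that the half-integer value of $k$ strictly rules out the exceptional branch in which both independent solutions would be $L^{2}$ at an endpoint. Once the limit-point characterization is in hand, the remaining functional-analytic steps proceed along standard lines.
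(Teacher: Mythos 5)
Your proposal is correct in outline but takes a genuinely different route from the paper, for the simple reason that the paper contains no argument at all: its ``proof'' is a one-line citation to the angular analysis of Finster--Kamran--Smoller--Yau. What you supply is essentially a self-contained version of what that reference does. Your individual steps check out: the identity $(\partial_\theta+\tfrac12\cot\theta)^{*}=-(\partial_\theta+\tfrac12\cot\theta)$ in $L^2((0,\pi),\sin\theta\,d\theta)$ is correct and does give $\mathcal{L}_\pm^{*}=-\mathcal{L}_\mp$, hence formal symmetry of the reduced $2\times2$ system; the Frobenius exponents $\theta^{|k|-1/2}$ and $\theta^{-|k|-1/2}$ are right, and half-integrality of $k$ indeed forces the limit-point case at both poles (for $|k|\ge\tfrac12$ the second branch fails square-integrability against $\sin\theta\,d\theta$, logarithmically in the borderline case $|k|=\tfrac12$); the dimension count for non-degeneracy is the standard one and is sound; and the type (A) analytic-family argument for the $\omega$-dependence is exactly what one should invoke, since $\omega$ enters only through the bounded multiplication operator $a\omega\sin\theta$. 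A side benefit of your direct approach is that you never have to check that the angular operator obtained here in Eddington--Finkelstein coordinates coincides with the one treated in the cited Boyer--Lindquist analysis (it does, since the separation in $\theta$ is untouched, but the paper leaves even that unsaid).

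The one step you should tighten is the compactness of the resolvent. Being in the limit-point case at both endpoints does not by itself force discrete spectrum or any ``endpoint decay'' (the free Dirac operator on $\R$ is limit point at both ends and has purely continuous spectrum), so the phrase ``endpoint decay forced by the limit-point condition'' is not a proof. What saves you here is specific to this operator: either (i) the confining blow-up of the potential term $k\csc\theta$ at both poles, which gives the coercivity needed for a compact form-domain embedding, or (ii) the classical singular Sturm--Liouville/Dirac route of building the Green's function from the unique (up to scale) $L^2$ solutions at each endpoint and checking it is Hilbert--Schmidt on the finite interval --- the latter being what the cited literature actually does. Either repair is routine, but one of them must be made explicit; as written, the completeness and discreteness claims rest on an inference that is false in general.
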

	
	\begin{proof}
		This was already proven in \cite{FinsterAsympKerrNew}. This ends the proof.
	\end{proof}
	
	\noindent Now let us focus on the radial ordinary differential equation $\mathcal{R}(r)$. We will slightly re-write the ordinary differential equation in a more symmetric form. This is accomplished 
	by defining $X_2(r):= r_+\tilde{X}_2(r)$, $X_1 (r) := \tilde{X}_1(r)$ and $X(r):=(X_1(r),X_2(r))^T$:
	
	\bes
	\ce
	\partial_r X(r) = \underbrace{\dfrac{1}{\Delta}
		\begin{bmatrix}
			i[\omega( 2 r^2 + 2a^2 - \Delta) +2ka] & \sqrt{|\Delta|}(-imr + \xi) \\
			\epsilon(\Delta)\sqrt{|\Delta|}(imr + \xi) & -i\Delta \omega
	\end{bmatrix}}_{=:\tilde{U}(r)} X(r)
	\ees
	One can see, that the above matrix has singularities of rank zero at $r= r_{\pm}$. We can get rid of these singularities when we express the radial ordinary differential equation in tortoise coordinates introduce in \eqref{EFr}.
	
	\beq
	\ce
	\partial_{r\star} X = U(r_{\star}) X  \qquad \text{for} \qquad U(r_{\star}) :=  \bigg(\dfrac{\Delta}{r^2+ a^2}\tilde{U}(r)\bigg)(r_{\star}) 
	\label{radialeODE}
	\eeq
	\newpage
	
	\section{Analysis of the Radial Ordinary Differential Equation}

	\subsection{Analysis at Infinity}
	
	\begin{proof}
		Since the matrix potential \eqref{radialeODE} converges for $r_{\star} \longrightarrow \infty$, we define its limit
		\be
		\ce
		\
		U_{\infty} := \lim\limits_{r\star \rightarrow \infty}{U(r_{\star})} = i
		\begin{pmatrix}
			\omega & -m \\
			m & -\omega \\
		\end{pmatrix} .
		\label{eq:23}
		\ee
		This matrix has the eigenvalues
		\bes
		\ce
		\lambda^{(0)}= \pm
		\begin{cases}
			-\sqrt{m^2 - \omega^2} = w_1 \in \R \qquad &m > |\omega| \\
			i\sqrt{\omega^2 - m^2} = w_2 \in \C \qquad &m < |\omega|
		\end{cases} .
		\ees
		Thus, the eigenvalues for the case $m > |\omega|$ are real. This leads to non-trivial solutions which grow exponentially with $\sim e^{w_1 r_{\star}} $ and decay exponentially with $\sim e^{-w_1 r_{\star}}$, 
		which is a consequence of \eqref{eq:43}.\\
		
		\noindent In a first step, we want to diagonalize the matrix potential \eqref{radialeODE} with the diagonalization matrix $D(r_{\star})$ and $\lambda_{1,2}$ being the eigenvalues of $U(r_{\star})$. We call this matrix 
		$S := D^{-1}U(r_{\star})D = \text{diag}(\lambda_1, \lambda_2)$. Since $U(r_{\star})$ has a regular expansion in a power series of $\frac{1}{r_{\star}}$ and as the spectrum depends smoothly on the matrix, this also holds for the eigenvalues $\lambda_{1,2}$ and $D$.\newline
		Thus, we get $D(r_{\star}) \in D_{\infty} + \mathcal{O}(\frac{1}{r_{\star}})$ where $D_{\infty}$ is the diagonalization matrix for \eqref{eq:23}. The terms of order $\mathcal{O}(\frac{1}{r_{\star}})$ can be 
		absorbed in $E_{\infty}(r_{\star})$ and by direct computation one can show the form in \eqref{theo1}.\newline
		Now, we re-write \eqref{radialeODE} in the form
		\bes
		\ce
		\partial_{r\star} \big(D^{-1}X\big) = \bigg[ S - D^{-1}\big(\partial_{r\star}D\big)\bigg] \big(D^{-1}	X\big) .
		\ees
		Therefore, it is reasonable to impose the ansatz
		\be
		\ce
		X(r_{\star}) := D(r_{\star}) 
		\begin{pmatrix}
			e^{i\Phi_+(r_{\star})}f^{(1)}(r_{\star}) \\
			e^{-i\Phi_-(r_{\star})}f^{(2)}(r_{\star}) \\
		\end{pmatrix} .
		\label{eq:41}
		\ee
		for real functions $\Phi_\pm$. We put $f := (f^{(1)},f^{(2)})^T$. Note that Eq. \ref{radialeODE} implies that $X$, if nontrivial, is nonzero for large $r_\star$, and together with Eq. \ref{eq:41} we conclude that $f$ is nonzero for large $r_\star$ for any nontrivial solution. We obtain the ODE 
		\bes
		\ce
		\partial_{r\star} f = \bigg[ S - W^{-1}D^{-1}\big(\partial_{r\star}D\big) W - W^{-1}\big(\partial_{r\star} 	W\big)\bigg]f
		\ees
		with $W (r_\star)= \text{diag}(e^{i\Phi_+(r_{\star})},e^{-i\Phi_-(r_{\star})})$. Additionally, choosing $\Phi_\pm$ such that
		$S = W^{-1} \big(\partial_{r\star} W 	\big)$ leads to the ODEs
		\be
		\ce
		\partial_{r\star}\Phi_+(r_{\star}) &= -i \lambda_1 , \qquad \partial_{r\star}\Phi_-(r_{\star}) = i 	\lambda_2   \\
		\partial_{r\star} f &= - W^{-1}D^{-1}\big(\partial_{r\star}D\big) W f .
		\label{eq:43}
		\ee
		To calculate the asymptotic phases, we will look at the first order expansion in $\frac{1}{r_\star}$ of the matrix potential
		\be
		\ce
		U(r_{\star}) \in i 
		\begin{pmatrix}
			\omega & m \\
			-m & -\omega \\
		\end{pmatrix}
		+ \dfrac{1}{r_{\star}} 
		\begin{pmatrix}
			2\omega M & -\xi + imM \\
			-\xi - imM & 2\omega M \\
		\end{pmatrix} + O(\frac{1}{r_\star^2}).
		\label{eq:44}
		\ee
		This leads to a first order expansion in the eigenvalues\footnote{The eigenvalues have terms of order $1/r_{\star}^2$. Therefore, it is necessary to expand those in a power series of $1/r_{\star}$ as well.}
		\be
		\ce
		\lambda_{1,2}^{(1)} \in  \pm i w_{1,2} + \dfrac{iM}{r_{\star}}	\bigg(2\omega \pm \dfrac{m^2}{w_{1,2}}\bigg)  + \mathcal{O}\bigg(\dfrac{1}{r_{\star}^2}\bigg) .
		\ee
		Now, one can plug in these eigenvalues in \eqref{eq:43} and ends up with the required asymptotic phases in \eqref{asyPhase}.\footnote{Due to the logarithmic behaviour the $1/r_{\star}$ terms in the eigenvalues are essential.}\newline
		It remains to show that $f$ has a non-trivial, finite limit and that the error term is of polynomial decay. We will start with the limit. Since $D(r_{\star})$ has a regular $\frac{1}{r_{\star}}$ expansion, for $r_{\star}$ sufficiently close to infinity we can bound the Hilbert-Schmidt norms
		\be
		\ce
		||D^{-1} (r_\star)||_{HS} \leq c , \qquad ||\partial_{r\star} D (r_\star)||_{HS} \leq \dfrac{d}{r_{\star}^2} \quad \text{with} \quad c,d > 0 .
		\ee
		Since $||W||_{HS} = ||W^{-1}||_{HS} = \sqrt{2} $ one can estimate the $\C^2$-norm of \eqref{eq:43}
		\be
		\ce
		||\partial_{r\star}f|| \leq 2 ||D^{-1}||_{HS} \cdot ||\partial_{r\star} D||_{HS} \cdot ||f|| \leq \dfrac{2cd}{r_{\star}^2} .
		\label{eq:27}
		\ee
		Keeping in mind that after Eq. \ref{eq:41} we saw that there is $D>0$ such that $\vert \vert f  \vert \vert  \neq 0$ for $r_\star >D$, with the Cauchy-Schwarz and triangle inequality we derive
		\be
		\ce
		\big|\partial_{r\star}||f|| \big| = \dfrac{|\partial_{r\star} \langle f,f \rangle|}{2 ||f||} \leq \dfrac{|\langle f,\partial_{r\star}f \rangle| + | \langle \partial_{r\star}f,f \rangle|}{2||f||} \leq \dfrac{||f|| \cdot ||\partial_{r\star}f||}{||f||} = ||\partial_{r\star}f|| .
		\ee
		Using \eqref{eq:27} we end up with the inequality
		\be
		\ce
		\big|\partial_{r\star}||f|| \big| \leq \dfrac{2cd}{r_{\star}^2}||f||
		\label{eq:28}
		\ee
		for any non-trivial solution. We conclude
		\be
		\ce
		&\bigg| \int_{r_0}^{r_\star} \partial_{r_\star '} \ln(||f||)\dif{r_{\star}'}\bigg| \leq 2cd \int_{r_0}^{r_\star}
		\dfrac{1}{(r_{\star}')^2}\dif{r_{\star}'} \nonumber \\
		\Longrightarrow &\big|\ln(||f||)\big| \bigg|_{r_0}^{r_\star} \leq - \dfrac{2cd}{r_{\star}'}\bigg|_{r_0}^{r_\star} .
		\ee
		Because of $0 < \frac{2cd}{r_{\star}'} \big|_{r_0}^{r_\star} < \infty$, there exists a constant $N > 0$ such that $\frac{1}{N} \leq ||f|| \leq N$. This implies, for sufficiently large $r_{\star}$,
		\be
		\ce
		||\partial_{r\star}f|| \leq \dfrac{b}{r_{\star}^2}
		\ee
		with $b = \frac{2cd}{N}$. Thus $r_\star \mapsto \partial_{r_\star}f$ is integrable and has a finite limit $f_{\infty}:= 
		\lim\limits_{r_\star \rightarrow \infty}{f(r_{\star})} \neq 0$. As a last step, we estimate the error $E_{\infty}$. Firstly,
		\be
		\ce
		||E_f|| = ||f-f_{\infty} || = \bigg|\bigg| \int_{r\star}^{\infty} \partial_{r\star '}f \dif{r_{\star} '} 			\bigg|\bigg| \leq \dfrac{b}{r_{\star}} .
		\label{eq:29}
		\ee
		The polynomial decay of the total error $E_{\infty}$ comes from the fact that the matrices $U$, $D$ and the eigenvalues $\lambda$ have a regular expansion in $\frac{1}{r_{\star}}$. When looking at the $\C^2$-norm of $E_{\infty}$ one obtains
		\be
		\ce
		||E_{\infty}|| &= || X - X_{\infty}|| = \bigg| \bigg| D
		\begin{pmatrix}
			f^{(1)}e^{i\Phi_+(r_{\star})} \\
			f^{(2)}e^{-i\Phi_-(r_{\star})} \\
		\end{pmatrix}
		-D_{\infty}
		\begin{pmatrix}
			f_{\infty}^{(1)}e^{i\Phi_+(r_{\star})} \\
			f_{\infty}^{(2)}e^{-i\Phi_-(r_{\star})} \\
		\end{pmatrix}
		\bigg| \bigg| \nonumber \\
		&= \bigg|\bigg|D_{\infty}W_{\infty}(f-f_{\infty}) + \mathcal{O}\big(\dfrac{1}{r_{\star}}\big) \bigg|\bigg| \leq 
		||D_{\infty}||_{HS}\cdot||W_{\infty}||_{HS}\cdot||E_f|| + \big|\big|\mathcal{O}\big(\dfrac{1}{r_{\star}}\big)\big|\big| .
		\ee
		Thus, with \eqref{eq:29} we can conclude that
		\be
		\ce
		||E_{\infty}|| \leq \dfrac{a}{r_{\star}} \qquad \text{with} \qquad a = \sqrt{2}b||D_{\infty}||_{HS} \in \R_+ .
		\ee
		This ends the proof.
	\end{proof}
	
	\subsection{Analysis at the Cauchy horizon}
	
	\begin{proof}
		We now prove the asymptotics described in Theorem \eqref{theo:2} for\footnote{The proof for the case $r \searrow r_+$ would be identical modulo a the sign flip in the exponential decay caused by the different asymptotic behaviour of $r_*$ at the horizons. However, the analysis at $r = r_+$ would not yield anything new for the case of smooth initial values given on a Cauchy surface for Boyer-Lindquist blocks I and II.} the case $r \searrow r_-$. \newline
		 Plugging the form \eqref{theo:2} of the solution into Eq. \eqref{radialeODE}, a short calculation yields
		
		\be
		\ce
		\partial_{r\star} h = \dfrac{i}{r^2+a^2}	
		\underbrace{\begin{bmatrix}
			-\omega\Delta - 2k\big(\Omega_{KN}^{(-)}\big(r^2+a^2) - a\big) & -e^{-2i\big(\omega + k\Omega^{(-)}_{KN}\big) r_{\star}}\sqrt{\Delta}\big(mr +i \xi \big) \\
			e^{2i\big(\omega + k\Omega^{(-)}_{KN}\big) r_{\star}}\sqrt{\Delta}\big(mr -i \xi \big) & -\omega\Delta \\
		\end{bmatrix}}_{=:B}h .
		\label{eq:2.16}
		\ee
		
		\noindent Note: $r \geq r_-$ implies $\epsilon(\Delta) = 1$. Since $r_-$ is a root of $\Delta$, the right-hand side of the above system vanishes in the limit $r \searrow r_-$. Therefore, the solution has a limit
		$h_{r_-} := \lim\limits_{r \rightarrow r_-}{h} \in \R^2 \setminus \{0\}$. \newline
		Next, we show that this is not a trivial solution. By Eq. \eqref{EFr}, for any $\epsilon \in (0; r_+ -r_-) $ we find constants $0< c_1 <c_2$ with
		\bes
		\ce
		c_1 e^{-2\alpha r_{\star}} + r_- < r < c_2 e^{-2\alpha r_{\star}} + r_-
		\ees
		for each $r \in (r_-, r_- + \epsilon)$ and for $\alpha := \frac{1}{2}\frac{r_+ - r_-}{r_-^2 + a^2} \in \R_+$.
	 Firstly, we note that this implies
		
			\bes
			\ce
			\Delta , \ \  \Omega_{KN}^{(-)}(r^2 + a^2) -a  \ \in \  \mathcal{O}(e^{-2 \alpha r_\star}) 
			\ees
			as functions of $r_\star$. Consequently, the operator norm $\vert B \vert $ of $B$ is in $\mathcal{O} (e^{- \alpha r_\star})$, and thus
		
		\bes
		\ce
		||\partial_{r\star}h|| \leq q{ e^{-\alpha r_{\star}}}||h||
		\ees
		with a suitable constant $q\in \R_+$. We repeat the steps of the previous proof and obtain
		\bes
		\ce
		\bigg| \ln(||h||)\big|^{r_0}_{r\star} \bigg| \leq \dfrac{q}{\alpha} e^{-\alpha r_{\star}'}\big|^{r_0}_{r\star}
		\ees
		$\forall r_{\star} \leq r_0$. Since the right-hand side is positive, there is $N  \in (0;\infty)$ with $\frac{1}{N} \leq ||h|| \leq N$. Combining both results yields
		\bes
		\ce
		||\partial_{r\star} h|| \leq b e^{-\alpha r_{\star}}
		\ees
		with $ b = qN$. Altogether, this implies that $r_\star \mapsto \partial_{r_\star} h$ is integrable and has a finite, non-zero limit for $r_{\star} \rightarrow \infty$.\\
		Finally, we estimate the error as a function of $r_{\star}$. We start with the error in $h$
		\bes
		\ce
		||E_h|| := ||h - h_{r_-}|| = \bigg|\bigg| \int^{r_{\star}}_{-\infty} \partial_{r\star '} h \dif{r_{\star '}} \bigg|\bigg| \leq be^{-\alpha r_{\star}}.
		\ees
		Thus the total error $E_{r_-}$ has the form
		
		\be
		\ce
		||E_{r_-}|| = ||X - X_{r_-}|| \leq ||h - h_{r_-}|| = ||E_h|| \leq b e^{-\alpha r_{\star}} .
		\ee
		This shows the exponential decay of $E_{r_-}$ and concludes the proof.
	\end{proof}

	\newpage
	\appendix

	\section{Proof of Eq. \ref{ErsteGleichung}}
	\label{erste}
	
	\noindent Writing the orthonormal basis \eqref{ONBasis} with the results from \eqref{tetradCov} and \eqref{tetradContr}, we get
	
	\be
	\ce
	u_{(0)}^{\mu} &= \dfrac{1}{2\sqrt{\Sigma}r_+}\bigg[ \big(2 r^2 + 2a^2 - \Delta +r_+^2\big)\partial_{\tau} + \big( \Delta - r_+^2 \big)\partial_{r} +2a\partial_{\phi} \bigg] \nonumber \\
	u_{(1)}^{\mu} &= \dfrac{1}{\sqrt{\Sigma}}\partial_{\theta} \nonumber \\
	u_{(2)}^{\mu} &= \dfrac{1}{\sqrt{\Sigma}}\bigg[ a\sin(\theta)\partial_{\tau} + \csc(\theta)\partial_{\phi}\bigg] \nonumber \\
	u_{(3)}^{\mu} &= \dfrac{1}{2\sqrt{\Sigma}r_+}\bigg[ \big(2 r^2 + 2a^2 - \Delta -r_+^2\big)\partial_{\tau} + \big( \Delta + r_+^2 \big)\partial_{r} +2a\partial_{\phi} \bigg]
	\label{eqA:2.1}
	\ee
	
	\V for the vectors, whereas for the forms we obtain
	
	\be
	\ce
	u_{(0)\mu} &= \dfrac{1}{2\sqrt{\Sigma}r_+}\bigg[\big( \Delta + r_+^2 \big)\dif{\tau} + \big(\Delta -2\Sigma + r_+^2\big)						\dif{r} - a\sin ^2(\theta)\big( \Delta + r_+^2\big)\dif{\phi}\bigg] \nonumber \\
	u_{(1)\mu} &= - \sqrt{\Sigma}\dif{\theta} \nonumber \\
	u_{(2)\mu} &= \dfrac{1}{\sqrt{\Sigma}}\bigg[ a\sin(\theta)\big(\dif{\tau} + \dif{r}\big) - \big(r^2 + a^2\big)\sin(\theta)								\dif{\phi} \bigg] \nonumber \\
	u_{(3)\mu} &= \dfrac{1}{2\sqrt{\Sigma}r_+}\bigg[\big( \Delta - r_+^2 \big)\dif{\tau} + \big(\Delta -2\Sigma - r_+^2\big)									\dif{r} - a\sin ^2(\theta)\big( \Delta - r_+^2\big)\dif{\phi}\bigg] .
	\ee
	We will start computing the first term in \eqref{BTerm}:
	\be
	\ce
	\dfrac{i}{2\sqrt{|g|}}\partial_j\big(\sqrt{|g|}u^j_{(a)}\big)\gamma^{(a)} = \underbrace{\dfrac{i}{2\sqrt{|g|}}\partial_j						\big(\sqrt{|g|}\big)u^j_{(a)}\gamma^{(a)}}_{=:I} + \underbrace{\dfrac{i}{2}\partial_j u^j_{(a)}\gamma^{(a)}}_{=:II}
	\ee
	\be
	\ce
	I &= \underbrace{\dfrac{i}{2\sqrt{|g|}}}_{=:B}\bigg[\big(\partial_r \sqrt{|g|}u^1_{(a)}\gamma^{(a)} + \big(\partial_{\theta}
	\sqrt{|g|}\big) \underbrace{u^2_{(a)}}_{a = 1}\gamma^{(a)} \bigg] \nonumber \\
	&=B \bigg[2r\sin(\theta)\big(u^1_{(0)}\gamma^{(0)} + u^1_{(3)}\gamma^{(3)} \big) + \Sigma \cos(\theta) u^2_{(1)}\gamma^{(1)}					\bigg] \nonumber \\
	&= \dfrac{i}{2}\bigg\{ \dfrac{r}{\Sigma^{3/2}r_+}\big[ (\Delta - r_+^2)\gamma^{(0)} + (\Delta + r_+^2)\gamma^{(3)}\big] + 					\dfrac{\cot(\theta)}{\sqrt{{\Sigma}}} \gamma^{(1)}\bigg	\} \nonumber \\
	&= \dfrac{ir}{2\Sigma^{3/2}r_+}\bigg[(\Delta - r_+^2)\gamma^{(0)} + (\Delta + r_+^2)\gamma^{(3)} \bigg] + 									\dfrac{i\cot(\theta)}{2\sqrt{{\Sigma}}} \gamma^{(1)}
	\ee
	\be
	\ce
	II &= \dfrac{i}{2}\bigg( \partial_r u^1_{(a)}\gamma^{(a)} + \partial_{\theta}u^2_{(a)}\gamma^{(a)}\bigg) \nonumber \\
	&= \dfrac{i}{2}\bigg( \partial_r u^1_{(0)}\gamma^{(0)} + \partial_r u^1_{(3)}\gamma^{(3)} + \partial_{\theta}u^2_{(1)}							\gamma^{(1)}\bigg) \nonumber \\
	&= \dfrac{i}{2r_+}\bigg[ \partial_r \bigg(\dfrac{\Delta - r_+ ^2}{\Sigma}\bigg)\gamma^{(0)} + \partial_r \bigg(\dfrac{\Delta + 					r_+ ^2}{\Sigma}\bigg)\gamma^{(3)}\bigg] + \dfrac{i}{2}\partial_{\theta}\dfrac{1}{\sqrt{\Sigma}}\gamma^{(1)} \nonumber \\
	&= \dfrac{i}{4r_+}\bigg[ \dfrac{2(r-M)-(\Delta - r_+ ^2)r}{\Sigma^{3/2}}\gamma^{(0)} + \dfrac{2(r-M)-(\Delta + r_+ ^2)r}						{\Sigma^{3/2}}\gamma^{(3)}\bigg] + \dfrac{ia^2}{\sqrt{2\Sigma^{3/2}}}\cos(\theta)\sin(\theta)\gamma^{(1)} \nonumber \\
	&= \dfrac{i(r-M)}{2 \sqrt{\Sigma}r_+}\bigg[\gamma^{0} + \gamma^{(3)}\bigg] - \dfrac{ir}{4\Sigma^{3/2}r_+}\bigg[ (\Delta - r_+^2)				\gamma^{(0)} + (\Delta + r_+^2)\gamma^{(3)}\bigg] + \dfrac{ia^2}{2\Sigma^{3/2}}\cos(\theta)\sin(\theta)\gamma^{(1)}
	\ee
	Adding both terms we get the first intermediate result
	\be
	\ce
	\label{FirstTerm}
	I + II &= \dfrac{i(r-M)}{2 \sqrt{\Sigma}r_+}\bigg[\gamma^{0} + \gamma^{(3)}\bigg] +i																		\dfrac{i\cot(\theta)}{2\sqrt{{\Sigma}}} \gamma^{(1)} + \dfrac{ia^2}{2\Sigma^{3/2}}\cos(\theta)\sin(\theta)									\gamma^{(1)} \nonumber \\
	&+ \dfrac{ir}{\Sigma^{3/2}r_+}\bigg[ \dfrac{1}{2}(\Delta - r_+^2)\gamma^{(0)} + \dfrac{1}{2}(\Delta + r_+^2)\gamma^{(3)} -						\dfrac{1}{4}(\Delta - r_+^2)\gamma^{(0)} - \dfrac{1}{4}(\Delta + r_+^2)\gamma^{(3)}\bigg] \nonumber \\
	&= \dfrac{i(r-M)}{2 \sqrt{\Sigma}r_+}\bigg[\gamma^{0} + \gamma^{(3)}\bigg] +\dfrac{i\cot(\theta)}{2\sqrt{{\Sigma}}} \gamma^{(1)} + 				\dfrac{ia^2}{2\Sigma^{3/2}}\cos(\theta)\sin(\theta)	\gamma^{(1)} \nonumber \\
	&+ \dfrac{ir}{4\Sigma^{3/2}r_+}\bigg[ (\Delta - r_+^2)\gamma^{(0)} + (\Delta + r_+^2)\gamma^{(3)}\bigg] .
	\ee
	The second term of \eqref{BTerm} is harder to compute. Firstly, we notice that $u^{\mu}_{(a)}$ is only a function of $r$ and $\theta$. Therefore $\mu \in \{1,2\}$. We will start with $\mu=1$ and look at the terms for each $(b)\in \{0,1,2,3\}$ independently.
	Here $\epsilon$ is the Levi-Civita symbol of curved space-time (the normed totally antisymmetric symbol) $\epsilon^{\mu \alpha \beta \delta} = \frac{\tilde{\epsilon}^{\mu \alpha \beta \delta}}{\sqrt{|g|}}$, so we can do the calculation with $\tilde{\epsilon}$, the Levi-Civita symbol of euclidean space-time, and divide the results by the factor $\sqrt{|g|}$.\newline
	\underline{$\mu = 1$ and $(b)=0$}:
	\be
	\ce
	&-\dfrac{1}{4}\tilde{\epsilon}^{1 \alpha \beta \delta}\underbrace{\eta^{(0)(0)}}_{=1}\underbrace{u_{(0)\alpha}}_{\alpha \neq 2}				\big(\partial_r \underbrace{u_{(0)\beta}}_{\beta \neq 2}\big)u_{(c)\delta}\gamma^{(c)}\gamma^5 \nonumber \\
	&=- \dfrac{1}{4}\tilde{\epsilon}^{1 \alpha \beta 2}u_{(0)\alpha}\big(\partial_r u_{(0)\beta}\big)u_{(1)2}\gamma^{(1)}						\gamma^5 \nonumber \\
	&=- \dfrac{1}{4} u_{(1)2}\gamma^{(1)}\gamma^5 \bigg[\underbrace{\tilde{\epsilon}^{1 0 3 2}}_{=1}u_{(0)0}\big(\partial_r u_{(0)3}				\big) + \underbrace{\tilde{\epsilon}}_{=-1}^{1 3 0 2}u_{(0)3}\big(\partial_r u_{(0)0}\big)\bigg] \nonumber \\
	&=- \dfrac{a\sin ^2(\theta)}{16 r_+^2 \sqrt{\Sigma}} u_{(1)2}\gamma^{(1)}\gamma^5\bigg[ -\big(\Delta + r_p^2\big) \partial_r 				\bigg( \dfrac{\Delta + r_p^2}{\sqrt{\Sigma}}\bigg) + \big(\Delta + r_p^2\big) \partial_r \bigg( \dfrac{\Delta + 							r_p^2}{\sqrt{\Sigma}}\bigg) \bigg] \nonumber \\
	&= 0
	\ee
	For the next cases we shorten the computations because they always include the same steps.\newline
	\underline{$\mu = 1$ and $(b)=1$}:
	\be
	\ce
	&-\dfrac{1}{4}\tilde{\epsilon}^{1 \alpha \beta \delta}\underbrace{\eta^{(1)(1)}}_{=-1}\underbrace{u_{(1)\alpha}}_{\alpha = 2}				\big(\partial_r \underbrace{u_{(1)\beta}}_{\beta = 2}\big)u_{(c)\delta}\gamma^{(c)}\gamma^5 = 0
	\ee
	\underline{$\mu = 1$ and $(b)=2$}:
	\be
	\ce
	... &= \dfrac{1}{4} u_{(1)2}\gamma^{(1)}\gamma^5 \bigg[u_{(2)0}\big(\partial_r u_{(2)3}\big) + 													u_{(2)3}\big(\partial_r u_{(2)0}\big)\bigg] \nonumber \\
	&=  u_{(1)2}\gamma^{(1)}\gamma^5 \dfrac{a \sin ^2(\theta)}{4\sqrt{\Sigma}}\bigg[ -\partial_r \bigg(\dfrac{r^2 + a^2}						{\sqrt{\Sigma}}\bigg) + (r^2+a^2)\partial_r\dfrac{1}{\sqrt{\Sigma}}\bigg] \nonumber \\
	&= u_{(1)2}\gamma^{(1)}\gamma^5 \dfrac{a \sin ^2(\theta)}{4\sqrt{\Sigma}}\bigg[\dfrac{-2r}{\sqrt{\Sigma}}\bigg] = 
	\dfrac{ra \sin ^2(\theta)}{2\sqrt{\Sigma}}\gamma^{(1)}\gamma^5
	\ee
	
	\underline{$\mu = 1$ and $(b)=3$}:
	\be
	\ce
	... &= \dfrac{1}{4} u_{(1)2}\gamma^{(1)}\gamma^5 \bigg[u_{(3)0}\big(\partial_r u_{(3)3}\big) + 													u_{(3)3}\big(\partial_r u_{(3)0}\big)\bigg] \nonumber \\
	&= \dfrac{a\sin ^2(\theta)}{16 r_+^2 \sqrt{\Sigma}} u_{(1)2}\gamma^{(1)}\gamma^5\bigg[ -\big(\Delta - r_p^2\big) \partial_r 				\bigg( \dfrac{\Delta - r_p^2}{\sqrt{\Sigma}}\bigg) + \big(\Delta - r_p^2\big) \partial_r \bigg( \dfrac{\Delta - 							r_p^2}{\sqrt{\Sigma}}\bigg) \bigg] \nonumber \\
	&= 0
	\ee
	This completes all possible terms for $\mu = 1$. The next step is doing the same for $\mu=2$. Again, starting with $(b)=0$.\newline
	\underline{$\mu = 2$ and $(b)=0$}:
	\be
	\ce
	&-\dfrac{1}{4}\tilde{\epsilon}^{2 \alpha \beta \delta}\underbrace{\eta^{(0)(0)}}_{=1}u_{(0)\alpha}\big(\partial_{\theta} u_{(0)				\beta}\big)u_{(c)\delta}\gamma^{(c)}\gamma^5 
	\ee
	This time we do not have sharp conditions on $\alpha$ and $\beta$. Therefore, we have to sum up over all permutations of $\{0,1,3\}$:
	\be
	\ce
	-\dfrac{1}{4}\bigg[ &\tilde{\epsilon}^{2 0 1 3}u_{(0)0}\big(\partial_{\theta} u_{(0)1}\big)u_{(c)3}\gamma^{(c)}\gamma^5 + 
	\tilde{\epsilon}^{2 1 0 3 }u_{(0)1}\big(\partial_{\theta} u_{(0)0}\big)u_{(c)3}	\gamma^{(c)}\gamma^5  \nonumber \\
	+ &\tilde{\epsilon}^{2 0 3 1}u_{(0)0}\big(\partial_{\theta} u_{(0)3}\big)u_{(c)1}\gamma^{(c)}\gamma^5 + 
	\tilde{\epsilon}^{2 3 1 0}u_{(0)3}\big(\partial_{\theta} u_{(0)1}\big)u_{(c)0}\gamma^{(c)}\gamma^5  	\nonumber \\	
	+ &\tilde{\epsilon}^{2 3 0 1}u_{(0)3}\big(\partial_{\theta} u_{(0)0}\big)u_{(c)1}\gamma^{(c)}\gamma^5 +
	\tilde{\epsilon}^{2 1 3 0}u_{(0)1}\big(\partial_{\theta} u_{(0)3}\big)u_{(c)0}\gamma^{(c)}\gamma^5\bigg]
	\ee
	This can be rearranged
	\be
	\ce
	&-\dfrac{1}{4}u_{(c)3}\gamma^{(c)}\gamma^5 \overbrace{\bigg[ \underbrace{\tilde{\epsilon}^{2 0 1 3}}_{=1}u_{(0)0}							\big(\partial_{\theta} u_{(0)1}\big) + \underbrace{\tilde{\epsilon}^{2 1 0 3 }}_{-1}u_{(0)1}\big(\partial_{\theta} 
		u_{(0)0}\big)\bigg]}^{=:I} \nonumber \\
	&-\dfrac{1}{4}u_{(c)1}\gamma^{(c)}\gamma^5 \overbrace{\bigg[ \underbrace{\tilde{\epsilon}^{2 3 0 1}}_{=1}u_{(0)3}							\big(\partial_{\theta} u_{(0)0}\big) + \underbrace{\tilde{\epsilon}^{2 0 3 1 }}_{-1}u_{(0)0}\big(\partial_{\theta} 
		u_{(0)3}\big)\bigg]}^{=:II} \nonumber \\
	&-\dfrac{1}{4}u_{(c)0}\gamma^{(c)}\gamma^5 \overbrace{\bigg[ \underbrace{\tilde{\epsilon}^{2 1 3 0}}_{=1}u_{(0)1}							\big(\partial_{\theta} u_{(0)3}\big) + \underbrace{\tilde{\epsilon}^{2 3 1 0 }}_{-1}u_{(0)3}\big(\partial_{\theta} 
		u_{(0)1}\big)\bigg]}^{=:III} .
	\ee
	Calculating the terms in the parenthesis first and then summing up over the last free index $(c)$:
	\be
	\ce
	I &= \dfrac{1}{4\sqrt{\Sigma}r_+^2}\bigg[ \big(\Delta + r_+^2)\partial_{\theta}\bigg(\dfrac{\Delta - 2\Sigma + r_+^2}							{\sqrt{\Sigma}}\bigg) - \big(\Delta - 2\Sigma + r_+^2\big)\partial_{\theta}\bigg(\dfrac{\Delta + r_+^2}{\sqrt{\Sigma}}						\bigg)\bigg] \nonumber \\
	&= \dfrac{a^2}{\Sigma r_+^2}\big(\Delta + r_+^2\big)\cos(\theta)\sin(\theta)
	\ee
	\be
	\ce
	II &= \dfrac{1}{4\sqrt{\Sigma}r_+^2}\big(\Delta + r_+^2\big)^2 a\bigg[ \partial_{\theta}\bigg( \dfrac{\sin ^2(\theta)}								{\sqrt{\Sigma}}\bigg) - \sin ^2(\theta)\partial_{\theta}\bigg(\dfrac{1}{\sqrt{\Sigma}}\bigg)\bigg] \nonumber \\
	&= \dfrac{(\Delta + r_+^2)^2 a}{\Sigma r_+^2}\sin(\theta)\cos(\theta)
	\ee
	\be
	\ce
	III &= \dfrac{a(\Delta + r_+^2)}{4\sqrt{\Sigma}r_+^2}\bigg[ \sin ^2(\theta)\partial_{\theta} \bigg(\dfrac{\Delta - 2 \Sigma + r_				+^2}{\sqrt{\Sigma}} \bigg) - \big( \Delta - 2 \Sigma + r_+^2 \big) \partial_{\theta} \bigg(\dfrac{\sin ^2(\theta)}							{\sqrt{\Sigma}} \bigg) \bigg] \nonumber \\
	&= \dfrac{a(\Delta + r_+^2)}{2\sqrt{\Sigma}r_+^2} \big( 2a^2 + 2r^2 - r_+^2 - \Delta \big) \cos(\theta) \sin(\theta)
	\ee
	Now, summing over $(c)$. Note that the $\gamma^{(1)}\gamma^5$-term is trivially zero.\newline
	\underline{$\gamma^{(0)}\gamma^5$-Term}:
	\be
	\ce
	-\dfrac{1}{4}\gamma^{(0)}\gamma^5\bigg[u_{(0)3}I + u_{(0)1}II + u_{(0)0}III\bigg] = 0
	\ee
	\underline{$\gamma^{(2)}\gamma^5$-Term}:		
	\be
	\ce
	-\dfrac{1}{4}\gamma^{(2)}\gamma^5\bigg[u_{(2)3}I + u_{(2)1}II + u_{(2)0}III\bigg] = 0
	\ee	
	\underline{$\gamma^{(3)}\gamma^5$-Term}:		
	\be
	\ce
	-\dfrac{1}{4}\gamma^{(3)}\gamma^5\bigg[u_{(3)3}I + u_{(3)1}II + u_{(3)0}III\bigg] = \dfrac{a(\Delta + r_+^2)}								{4\sqrt{\Sigma}r_+}\cos(\theta)\sin(\theta)\gamma^{(3)}\gamma^5
	\ee	
	\underline{$\mu = 2$ and $(b)=1$}:
	\be
	\ce
	-\dfrac{1}{4}\tilde{\epsilon}^{2 \alpha \beta \delta}\overbrace{\eta^{(1)(1)}}^{=-1}\underbrace{u_{(1)\alpha}}_{(= 0 ,\alpha 				\neq 2) }\big(\partial_{\theta}\underbrace{u_{(1)}\beta}_{( = 0, \beta \neq 2)}\big)u_{(c)\delta}\gamma^{(c)}\gamma^5 = 0
	\ee
	Therefore, all terms with $\mu = 2$ and $(b)=1$ are zero. The next step involves calculating for $(b)=2$. Again, we have to sum over all permutations of $\{0,1,3\}$ in the Levi-Civita symbol. Skipping the first parts we end up \newline
	\underline{$\mu = 2$ and $(b)=2$}:
	\be
	\ce
	... &=\dfrac{1}{4}u_{(c)3}\gamma^{(c)}\gamma^5 \overbrace{\bigg[ \underbrace{\tilde{\epsilon}^{2 0 1 3}}_{=1}u_{(2)0}							\big(\partial_{\theta} u_{(2)1}\big) + \underbrace{\tilde{\epsilon}^{2 1 0 3 }}_{-1}u_{(2)1}\big(\partial_{\theta} 
		u_{(2)0}\big)\bigg]}^{=:I} \nonumber \\
	&+\dfrac{1}{4}u_{(c)1}\gamma^{(c)}\gamma^5 \overbrace{\bigg[ \underbrace{\tilde{\epsilon}^{2 3 0 1}}_{=1}u_{(2)3}							\big(\partial_{\theta} u_{(2)0}\big) + \underbrace{\tilde{\epsilon}^{2 0 3 1 }}_{-1}u_{(2)0}\big(\partial_{\theta} 
		u_{(1)3}\big)\bigg]}^{=:II} \nonumber \\
	&+\dfrac{1}{4}u_{(c)0}\gamma^{(c)}\gamma^5 \overbrace{\bigg[ \underbrace{\tilde{\epsilon}^{2 1 3 0}}_{=1}u_{(2)1}							\big(\partial_{\theta} u_{(2)3}\big) + \underbrace{\tilde{\epsilon}^{2 3 1 0 }}_{-1}u_{(2)3}\big(\partial_{\theta} 
		u_{(2)1}\big)\bigg]}^{=:III}
	\ee
	\be
	\ce
	I = \dfrac{a^2}{\sqrt{\Sigma}}\bigg[ - \partial_{\theta}\bigg(\dfrac{\sin ^2(\theta)}{\sqrt{\Sigma}}\bigg) + 
	\partial_{\theta}\bigg(\dfrac{\sin ^2(\theta)}{\sqrt{\Sigma}}\bigg)\bigg] = 0
	\ee
	\be
	\ce
	II = \dfrac{a(r^2+a^2)}{\sqrt{\Sigma}}\bigg[ - \partial_{\theta}\bigg(\dfrac{\sin ^2(\theta)}{\sqrt{\Sigma}}\bigg) + 
	\partial_{\theta}\bigg(\dfrac{\sin ^2(\theta)}{\sqrt{\Sigma}}\bigg)\bigg] = 0
	\ee
	\be
	\ce
	III = \dfrac{(r^2+a^2)^2}{\sqrt{\Sigma}}\bigg[ - \partial_{\theta}\bigg(\dfrac{\sin ^2(\theta)}{\sqrt{\Sigma}}\bigg) + 
	\partial_{\theta}\bigg(\dfrac{\sin ^2(\theta)}{\sqrt{\Sigma}}\bigg)\bigg] = 0
	\ee
	Thus, all terms with $\mu = 2$ and $(b)=2$ are zero. Finally, we end up with the last terms with $(b)=3$. \newline
	\underline{$\mu = 2$ and $(b)=3$}:
	\be
	\ce
	... &=\dfrac{1}{4}u_{(c)3}\gamma^{(c)}\gamma^5 \overbrace{\bigg[ \underbrace{\tilde{\epsilon}^{2 0 1 3}}_{=1}u_{(3)0}							\big(\partial_{\theta} u_{(3)1}\big) + \underbrace{\tilde{\epsilon}^{2 1 0 3 }}_{-1}u_{(3)1}\big(\partial_{\theta} 
		u_{(3)0}\big)\bigg]}^{=:I} \nonumber \\
	&+\dfrac{1}{4}u_{(c)1}\gamma^{(c)}\gamma^5 \overbrace{\bigg[ \underbrace{\tilde{\epsilon}^{2 3 0 1}}_{=1}u_{(3)3}							\big(\partial_{\theta} u_{(3)0}\big) + \underbrace{\tilde{\epsilon}^{2 0 3 1 }}_{-1}u_{(3)0}\big(\partial_{\theta} 
		u_{(3)3}\big)\bigg]}^{=:II} \nonumber \\
	&+\dfrac{1}{4}u_{(c)0}\gamma^{(c)}\gamma^5 \overbrace{\bigg[ \underbrace{\tilde{\epsilon}^{2 1 3 0}}_{=1}u_{(3)1}							\big(\partial_{\theta} u_{(3)3}\big) + \underbrace{\tilde{\epsilon}^{2 3 1 0 }}_{-1}u_{(3)3}\big(\partial_{\theta} 
		u_{(3)1}\big)\bigg]}^{=:III}
	\ee
	\be
	\ce
	I &= \dfrac{1}{4\sqrt{\Sigma}r_+^2}\bigg[ \big(\Delta + r_+^2\big) \partial_{\theta} \bigg(\dfrac{\Delta - 2 \Sigma - r_+^2}						{\sqrt{\Sigma}}\bigg) - \big( \Delta - 2 \Sigma - r_+^2 \big) \partial_{\theta} \bigg(\dfrac{\Delta + r_+^2}								{\sqrt{\Sigma}}\bigg)\bigg] \nonumber \\
	&= \dfrac{a^2}{\Sigma r_+^2}\big(\Delta - r_+^2\big)\cos(\theta)\sin(\theta)
	\ee
	\be
	\ce
	II &= \dfrac{a}{4 \sqrt{\Sigma}r_+^2}\big(\Delta - r_+^2 \big)^2\bigg[\partial_{\theta}\bigg( \dfrac{\sin ^2(\theta)}								{\sqrt{\Sigma}}\bigg) - \sin ^2(\theta)\big(\dfrac{1}{\sqrt{\Sigma}}\bigg) \bigg] \nonumber \\
	&= \dfrac{(\Delta - r_+^2)a}{2\Sigma r_+^2}\sin(\theta)\cos(\theta)
	\ee
	\be
	\ce
	III &= \dfrac{a}{4 \sqrt{\Sigma}r_+^2}\big(\Delta - r_+^2 \big)\bigg[ \sin ^2(\theta)\partial_{\theta}\bigg( \dfrac{\Delta - 2 					\Sigma -r_+^2}{\sqrt{\Sigma}}\bigg) - \big( \Delta - 2 \Sigma -r_+^2\big)\partial_{\theta}\bigg(\dfrac{\sin ^2(\theta)}						{\sqrt{\Sigma}} \bigg)\bigg] \nonumber \\
	&= \dfrac{a(\Delta - r_+^2 )}{4 \sqrt{\Sigma}r_+^2}\big( 2a^2 + 2r^2 +r_+^2 - \Delta \big) \cos(\theta) \sin(\theta)
	\ee
	Summing up over $(c)$. Note that the $\gamma^{(1)}\gamma^5$-term is trivially zero.\newline
	\underline{$\gamma^{(0)}\gamma^5$-Term}:
	\be
	\ce
	\dfrac{1}{4}\gamma^{(0)}\gamma^5\bigg[u_{(0)3}I + u_{(0)1}II + u_{(0)0}III\bigg] = \dfrac{a(\Delta - r_+^2)}								{4\sqrt{\Sigma}r_+}\cos(\theta)\sin(\theta)\gamma^{(0)}\gamma^5
	\ee
	\underline{$\gamma^{(1)}\gamma^5$-Term}:		
	\be
	\ce
	\dfrac{1}{4}\gamma^{(2)}\gamma^5\bigg[u_{(2)3}I + u_{(2)1}II + u_{(2)0}III\bigg] = 0
	\ee	
	\underline{$\gamma^{(3)}\gamma^5$-Term}:		
	\be
	\ce
	\dfrac{1}{4}\gamma^{(3)}\gamma^5\bigg[u_{(3)3}I + u_{(3)1}II + u_{(3)0}III\bigg] = 0
	\ee
	Dividing all terms with $\sqrt{|g|}$ we end up with the second term of \eqref{BTerm}:
	\be
	\ce
	-\dfrac{1}{4}\epsilon^{\mu \alpha \beta \delta}\eta^{(a)(b)}u_{(a)\alpha}\big(\partial_{\mu}u_{(b)\beta}\big)u_{(c)\delta}					\gamma^{(c)}\gamma^5 &= \dfrac{a(\Delta - r_+^2)}{4\Sigma^{3/2}r_+}\cos(\theta)\sin(\theta)\gamma^{(0)}\gamma^5  \nonumber \\
	&+ \dfrac{a(\Delta + r_+^2)}{4\Sigma^{3/2}r_+}\cos(\theta)\sin(\theta)\gamma^{(3)}\gamma^5 \nonumber \\
	&+ \dfrac{r a \sin ^2(\theta)}{2\sqrt{\Sigma}}\gamma^{(1)}\gamma^5
	\ee
	
	which together with \eqref{FirstTerm} concludes the proof. \hfill \qed
	
	\section{Proof of Eq. \ref{ZweiteGleichung}}
	\label{zweite}
	
	\V For the Dirac operator we need to compute a second term containing the partial derivatives
	\be
	\ce
	iG^{\mu}\dfrac{\partial}{\partial x^{\mu}} &= i \big( u^{\mu}_{(a)}\gamma^{(a)}\partial_{\mu}\big) \nonumber \\
	&= i\gamma^{(0)}\bigg[u^0_{(0)}\partial_{\tau} + u^1_{(0)}\partial_{r} + u^3_{(0)}\partial_{\phi}\bigg] \nonumber \\
	&+ i\gamma^{(1)}u^2_{(1)}\partial_{\theta}  + i\gamma^{(2)}\bigg[u^0_{(2)}\partial_{\tau} + u^3_{(2)}\partial_{\phi}\bigg]   						\nonumber \\
	&+ i\gamma^{(3)}\bigg[u^0_{(3)}\partial_{\tau} + u^1_{(3)}\partial_{r} + u^3_{(3)}\partial_{\phi}\bigg].
	\ee	
	Plugging in the Dirac matrices we can define differential operators of the form
	\be
	\ce
	D_1 &:= -\dfrac{i}{\sqrt{\Sigma}r_+}\bigg[\big( 2 r^2 + 2a^2 - \Delta )\partial_{\tau} + \Delta\partial_r + 2a								\partial_{\phi}\bigg]\nonumber \\
	D_0 &:= -\dfrac{ir_+}{\sqrt{\Sigma}}\bigg[ \partial_{\tau} - \partial_r \bigg] \nonumber \\
	A_1 &:= -\dfrac{i}{\sqrt{\Sigma}}\partial_{\theta} - \dfrac{1}{\sqrt{\Sigma}}\big(a\sin(\theta)\partial_{\tau} + \csc(\theta)							\partial_{\phi} \big) \nonumber \\
	A_2 &:= -\dfrac{i}{\sqrt{\Sigma}}\partial_{\theta} + \dfrac{1}{\sqrt{\Sigma}}\big(a\sin(\theta)\partial_{\tau} + \csc(\theta)						\partial_{\phi} \big),
	\ee
	which have the following positions in the $4\times 4$ matrix:
	\be
	\ce
	iG^{\mu}\dfrac{\partial}{\partial x^{\mu}} = 
	\begin{bmatrix}
		0& 0& D_1&A_1 \\
		0& 0& A_2&D_0 \\
		D_0 & -A_1 & 0 & 0\\
		-A_2 & D_1 & 0 & 0 \\
	\end{bmatrix}
	\label{eq:A2.3}
	\ee
	Combining \eqref{ErsteGleichung} and \eqref{eq:A2.3} we end up with the Dirac operator as in \eqref{ZweiteGleichung}. \hfill \qed

	\section{Proof of Eq. \ref{DritteGleichung}}
	\label{dritte}
	
	\noindent By choosing the transformation $D := \text{diag}(\bar{\delta}^{1/2}, (\bar{\delta}|\Delta|)^{1/2}, (\delta|\Delta|)^{1/2}, \delta^{1/2})$ with $\delta := (r+ia\cos(\theta))$ and $\bar{\delta}:= (r-ia\cos(\theta))$ we can express the Dirac-Equation in a much simpler form:
	\be
	\ce
	\big(G-m\big)D^{-1} = 
	\begin{bmatrix}
		-m \bar{\delta}^{-1/2} & 0 & \alpha_1(\delta|\Delta|)^{-1/2} & \beta_-\delta ^{-1/2} \\
		0 & -m (\bar{\delta}|\Delta|)^{-1/2} & \beta_+(\delta|\Delta|)^{-1/2} & \alpha_0\delta ^{-1/2} \\
		\bar{\alpha}_0\bar{\delta}^{-1/2} & \bar{\beta}_+(\bar{\delta}|\Delta|)^{-1/2} & -m(\delta|\Delta|)^{-1/2} & 0\\
		\bar{\beta}_-\bar{\delta}^{-1/2} & \bar{\alpha}_1(\bar{\delta}|\Delta|)^{-1/2} & 0 & -m(\delta|\Delta|)^{-1/2}\\
	\end{bmatrix}
	\ee
	\be
	\ce
	\alpha_1(\delta|\Delta|)^{-1/2} &= \overbrace{-\dfrac{i}{\sqrt{\Sigma}r_+}\bigg[ \big( 2 r^2 + 2a^2 - \Delta\big)\partial_{\tau} + 2a\partial_{\phi} \bigg] (\delta|\Delta|)^{-1/2}}^{=:C}\nonumber \\
	&-\dfrac{\Delta}{\sqrt{\Sigma}r_+}\bigg[ i \partial_r \bigg(\dfrac{1}{\sqrt{(r+ia\cos(\theta))|\Delta|}}\bigg) + i\dfrac{r-M}{\Delta\sqrt{(r+ia\cos(\theta))|\Delta|}}  \nonumber \\
	&+ \dfrac{i}{2\Sigma\sqrt{|\Delta|}}\bigg( \dfrac{r - ia\cos(\theta)}{\sqrt{r + ia\cos(\theta)}}\bigg) \bigg] \nonumber \\
	&= C - \dfrac{i \Delta}{\sqrt{\Sigma}r_+} (\delta|\Delta|)^{-1/2} \partial_r \nonumber \\
	&- \dfrac{i \sqrt{|\Delta|}}{2\sqrt{\Sigma}r_+} \bigg[-\dfrac{1}{(r+ia\cos(\theta))^{3/2}} + \dfrac{r - ia\cos(\theta)}{(r + ia\cos(\theta))^{3/2} (r- ia\cos(\theta))} \bigg]\nonumber \\
	&= C - (\delta|\Delta|)^{-1/2} \dfrac{i\Delta}{\sqrt{\Sigma}r_+}\partial_r = (\delta|\Delta|)^{-1/2} \alpha_1'
	\ee
	Likewise computation for $\bar{\alpha}_1$ leads to same result $ \bar{\alpha}_1(\bar{\delta}|\Delta|)^{-1/2} = (\bar{\delta}|\Delta|)^{-1/2} \bar{\alpha}_1' $ with $\bar{\alpha}_1' =\alpha_1' $:
	\be
	\ce
	\alpha_0\delta^{-1/2} &= \overbrace{-\dfrac{ir_+}{\sqrt{\Sigma}}\delta^{-1/2}\bigg[\partial_{\tau} - \partial_r \bigg]}^{=:C} + 			\dfrac{ir}{\sqrt{\Sigma}}\partial_r \delta^{-1/2} + \dfrac{ir_+}{2\Sigma^{3/2}}\bigg[r - ia\cos(\theta)\bigg]\nonumber \\
	&= C + \dfrac{ir_+}{2\sqrt{\Sigma}}\underbrace{\bigg(\dfrac{r-ia\cos(\theta)}{\Sigma} - \dfrac{1}{r+ia\cos(\theta)} \bigg)}_{=0}
	\nonumber \\
	&= \delta^{-1/2}\alpha_0'
	\ee
	Equivalent computation for $\bar{\alpha}_0$ leads to the same result $ \bar{\alpha}_0\bar{\delta}^{-1/2} = \bar{\delta}^{-1/2} \bar{\alpha}_0' $ with $\bar{\alpha}_0' =\alpha_0' $:
	\be
	\ce
	\beta_-\delta^{-1/2} &=   \overbrace{-\dfrac{1}{\sqrt{\Sigma}}\delta^{-1/2}\bigg[ i\partial_{\theta} +i \dfrac{\cot(\theta)}{2} + 								\big(a\sin(\theta)\partial_{\tau} + \csc(\theta)\partial_{\phi} \big)\bigg]}^{=:C} \nonumber \\
	&-\dfrac{1}{\sqrt{\Sigma}}\bigg[i\partial_{\theta}\dfrac{1}{\sqrt{(r+ia\cos(\theta))}} + + \dfrac{a												\sin(\theta)}{2\Sigma}\bigg(\dfrac{r - ia\sin(\theta)}{\sqrt{r + ia\sin(\theta)}}\bigg)\bigg]\nonumber \\
	&= C - \dfrac{a\sin(\theta)}{2(r + ia\sin(\theta))}\underbrace{\bigg(\dfrac{r-ia\cos(\theta)}{\Sigma} - 											\dfrac{1}{r+ia\cos(\theta)} \bigg)}_{=0} = \delta^{-1/2}\beta_-'
	\ee
	Identical computations for $\bar{\beta}_-$, $\beta_+$ and $\bar{\beta}_+$ lead to the same results $\bar{\beta}_-\bar{\delta}^{-1/2} =\bar{\delta}^{-1/2} \bar{\beta}_-' $ and $\bar{\beta}_+\bar{\delta}^{-1/2} =\bar{\delta}^{-1/2} \bar{\beta}_+'$ with $\bar{\beta}_-' = \beta_-'$ and $\bar{\beta}_+' = \beta_+'$ \footnote{The difference between $\beta_-'$ and $\beta_+'$ is only the sign in front of the last term.}. Thus, we end up with the full transformed matrix:
	\be
	\ce
	D\big(G-m\big)D^{-1} = 
	\begin{bmatrix}
		-m & 0 & |\Delta|^{-1/2}\bigg(\dfrac{\bar{\delta}}{\delta}\bigg)^{1/2}\alpha_1 ' & \bigg( \dfrac{\bar{\delta}}{\delta}						\bigg)^{1/2} \beta_- '\\
		0 & -m & \bigg(\dfrac{\bar{\delta}}{\delta}\bigg)^{1/2}\beta_+ ' & |\Delta|^{1/2}\bigg(\dfrac{\bar{\delta}}{\delta}\bigg)^{1/2}
		\alpha_0 ' \\
		|\Delta|^{1/2}\bigg(\dfrac{\delta}{\bar{\delta}}\bigg)^{1/2}\bar{\alpha}_0' & \bigg(\dfrac{\delta}{\bar{\delta}}\bigg)^{1/2} 				\bar{\beta}_+' & -m & 0 \\
		\bigg(\dfrac{\delta}{\bar{\delta}}\bigg)^{1/2}\bar{\beta}_-' & |\Delta|^{-1/2}\bigg(\dfrac{\delta}{\bar{\delta}}\bigg)^{1/2} 
		\bar{\alpha}_1 ' & 0 & -m\\
	\end{bmatrix}
	\ee
	Since following relations
	\be
	\ce
	\bigg(\dfrac{\delta}{\bar{\delta}}\bigg)^{1/2} \bar{\delta} = \bigg(\dfrac{\delta ^2}{\Sigma}\bigg)^{1/2}\bar{\delta} = 					\dfrac{1}{\Sigma}\delta\bar{\delta} = \dfrac{\Sigma}{\sqrt{\Sigma}} = \sqrt{\Sigma} = \bigg(\dfrac{\bar{\delta}}{\delta}					\bigg)^{1/2}
	\ee
	hold, we multiply the Dirac-Equation from the left with the matrix $\Gamma := -i\text{diag}(\delta, - \delta, -\bar{\delta}, \bar{\delta})$. In a last step we multiply the whole left hand side with $(-1)$. Finally, we end up with the result
	\be
	\ce
	\Gamma D \big(G-m\big) D^{-1} = 
	\begin{bmatrix}
		i\delta m & 0 & \sqrt{|\Delta|}^{-1}\tilde{\mathcal{D}_1} &\tilde{\mathcal{L}_+} \\
		0 & -i\delta m & -\tilde{\mathcal{L}_-} &\sqrt{|\Delta|}\tilde{\mathcal{D}_0}\\
		\sqrt{|\Delta|}\tilde{\mathcal{D}_0} & \tilde{\mathcal{L}_+} & -i\bar{\delta}m & 0 \\
		-\tilde{\mathcal{L}_-} & \sqrt{|\Delta|}^{-1}\tilde{\mathcal{D}_1} & 0 & i\bar{\delta}m \\
	\end{bmatrix}
	\ee
	with the differential operators
	\be
	\ce
	\tilde{\mathcal{D}_1} &= \dfrac{1}{r_+} \bigg[ \big(2r^2 + 2a^2 - \Delta\big) \partial_{\tau} + \Delta\partial_r 			+ 		2a\partial_{\phi}\bigg] \nonumber \\
	\tilde{\mathcal{D}_0} &=-r_+\bigg[\partial_{\tau} - \partial_r \bigg] \nonumber \\
	\tilde{\mathcal{L}_{\pm}} &= \bigg[ \partial_{\theta} + \dfrac{\cot(\theta)}{2} \bigg] \mp i \big(a\sin(\theta)\partial_{\tau} 			+ \csc(\theta)\partial_{\phi} \big).
	\ee
	This ends the proof. \hfill \qed


	\newpage
	\pagestyle{plain}



\end{document}